\def\bu{\bullet}
\def\marker{\>\hbox{${\vcenter{\vbox{
    \hrule height 0.4pt\hbox{\vrule width 0.4pt height 6pt
    \kern6pt\vrule width 0.4pt}\hrule height 0.4pt}}}$}\>}
\def\gpic#1{#1
     \medskip\par\noindent{\centerline{\box\graph}} \medskip}
\newcommand{\vey}[1]{\mathcal{W}^{#1}}
\newcommand{\ved}{\vey{d}}
\newcommand{\ints}{\mathbb{Z}}
\newcommand{\NN}{\mathbb{N}}
\newtheorem{theorem}{Theorem}[section]
\newtheorem{lemma}[theorem]{Lemma}
\newtheorem{conj}[theorem]{Conjecture}
\newtheorem{prop}[theorem]{Proposition}
\newtheorem{corollary}[theorem]{Corollary}
\newtheorem*{innerptlanswer}{Partial Answer to Question \anslabel}
\newtheorem*{inneranswer}{Answer to Question \anslabel}
\theoremstyle{definition}
\newtheorem{definition}[theorem]{Definition}
\newcommand{\anslabel}{foo}
\newcommand{\sizeof}[1]{\left|{#1}\right|}
\newcommand{\FL}[1]{\left\lfloor #1 \right\rfloor}
\newcommand{\CL}[1]{\left\lceil #1 \right\rceil}
\def\H{\mathcal{H}}
\def\VEC#1#2#3{#1_{#2},\ldots,#1_{#3}}
\def\B#1{{\bf #1}}
\def\SE#1#2#3{\sum_{#1=#2}^{#3}}
\def\PE#1#2#3{\prod_{#1=#2}^#3}
\def\st{\colon\,}
\def\CH#1#2{\binom{#1}{#2}}
\def\P{{\rm Pr}}
\def\C#1{\left|{#1}\right|}
\def\plu{\pmb+}
\def\Bkpsq{\B{\pmb(k\plu1\pmb)}^2}
\title{Chain-making games in grid-like posets}
\author{
Daniel W. Cranston\thanks{Mathematics Dept., Virginia Commonwealth University,
dcranston@vcu.edu},
William B. Kinnersley\thanks{Mathematics Dept., University of Illinois,
wkinner2@illinois.edu},
Kevin G. Milans\thanks{Mathematics Dept., University of South Carolina,
milans@math.sc.edu},\\
Gregory J. Puleo\thanks{Mathematics Dept., University of Illinois,
puleo@illinois.edu.  Research of Kinnersley, Milans, and Puleo supported by
NSF grant DMS 08-38434, ``EMSW21-MCTP: Research Experience for Graduate
Students''.},
and Douglas B. West\thanks{Mathematics Dept., University of Illinois,
west@math.uiuc.edu.  Research partially supported by
the National Security Agency under Award No.~H98230-10-1-0363.}
}
\begin{document}
\vspace{-.2in}
\maketitle

\vspace{-1pc}

\begin{abstract}
We study the Maker-Breaker game on the hypergraph of chains of fixed size in a
poset.  In a product of chains, the maximum size of a chain that Maker can
guarantee building is $k-\FL{r/2}$, where $k$ is the maximum size of a chain in
the product, and $r$ is the maximum size of a factor chain.  We also study a
variant in which Maker must follow the chain in order, called the
{\it Walker-Blocker game}.  In the poset consisting of the bottom $k$ levels
of the product of $d$ arbitrarily long chains, Walker can guarantee a chain
that hits all levels if $d\ge14$; this result uses a solution to Conway's
Angel-Devil game.  When $d=2$, the maximum that Walker can guarantee is only
$2/3$ of the levels, and $2/3$ is asymptotically achievable in the product of
two equal chains.
\end{abstract}

\section{Introduction}
The {\it Maker-Breaker} game on a hypergraph $\H$ is played by Maker and
Breaker, who alternate turns (beginning with Maker).  Each player moves by
choosing a previously unchosen vertex of $\H$.  Maker wins by acquiring all
vertices of some edge of $\H$; Breaker wins if all vertices are chosen without
Maker winning.

Maker-Breaker games have been studied for many hypergraphs, particularly when
the vertices are the edges of an $n$-vertex complete graph.  In that setting,
when $n$ is large enough Maker can build a spanning cycle, a complete subgraph
with $q$ vertices, a spanning $k$-connected subgraph, or various other
structures.  For an introduction to Maker-Breaker games (and more general
positional games), see the surveys~\cite{Beck1} and~\cite{Beck2} or the
book~\cite{Beck3}.  Recent work has concentrated on finding ``efficient
strategies'', winning the game as quickly as possible (see~\cite{BL, HKSS}).

In this paper, we study the {\it chain game} on posets, where the winning sets
are the chains with a given size.  For every poset there is a maximum size of
chain that Maker can build against optimal play by Breaker; we seek this value.
For special posets whose elements are integer $d$-tuples, we give efficient
strategies for Maker that do not waste any move (every element that Maker
selects is in the chain constructed).

Chains in posets are ordered from bottom to top, so a natural variant of the
chain game is the {\it ordered chain game}, in which the chain must be built
from bottom to top.  To distinguish this from the (unordered) chain game, we
call its players {\it Walker} and {\it Blocker}.

The {\it product} of $d$ chains with sizes $\VEC r1d$, written
$\PE i1d \B r_i$, is the set of $d$-tuples $x$ such that $0\le x_i< r_i$ for
$1\le i\le d$, ordered by $x\preceq y$ if and only if $x_i\le y_i$ for
$1\le i\le d$.  The {\it $d$-dimensional $k$-wedge}, written $\vey{d}_k$, is
the subposet of $\B k^d$ consisting of the nonnegative-integer $d$-tuples with
sum less than $k$, under the coordinate-wise order.

For the chain game and ordered chain game on these posets, we prove two main
results.

\begin{theorem}\label{chainprod}
Let $P$ be a product of $d$ chains, with $r$ being the maximum size among these
chains and $k$ being the maximum size of a chain in $P$.  In the chain game on
$P$, Maker can build a chain of size $k-\FL{r/2}$, and Breaker can prevent
Maker from building a larger chain.
\end{theorem}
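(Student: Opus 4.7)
The plan is to prove the two bounds separately, with the Breaker direction following a clean pairing argument and the Maker direction requiring a more delicate adaptive strategy. Throughout I reindex so that $r_1=r$ is the longest factor and use the identity $k = r + \sum_{i=2}^{d}(r_i-1)$.

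For the upper bound, I would have Breaker use the pairing that matches $(2j,x_2,\ldots,x_d)$ with $(2j+1,x_2,\ldots,x_d)$ for $0 \le j < \FL{r/2}$, leaving the slice $\{x\colon x_1 = r-1\}$ unpaired when $r$ is odd. Breaker responds to each Maker move by claiming the partner when still available, and plays arbitrarily otherwise; a standard invariance argument then shows Maker never holds both members of any pair. To convert this into a chain-length bound, define the reduced rank
\[
  \phi(x) = \FL{x_1/2} + \sum_{i=2}^{d} x_i,
\]
whose range on $P$ is the integer interval $\{0,1,\ldots,k-\FL{r/2}-1\}$. For $x \prec y$ in $P$ we have $\phi(y)\ge\phi(x)$, with equality precisely when $\{x,y\}$ is one of Breaker's pairs. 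Since Maker's set omits at least one element of every pair, $\phi$ strictly increases along any chain in Maker's set, capping it at $k-\FL{r/2}$ elements.

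For the lower bound, the idea is for Maker to secure one representative of each cell along a longest chain of the quotient poset $P/\!\sim$ obtained by collapsing each Breaker pair. That quotient is itself a product of chains, with first factor of size $\CL{r/2}$ and remaining factors unchanged, so its longest chain has length exactly $k-\FL{r/2}$. Maker fixes such a quotient chain $\tilde c_0 \prec \cdots \prec \tilde c_{s-1}$ in advance and plays a hybrid rule: if Breaker's last move landed in a cell $\tilde c_i$ whose partner in $P$ is still free, Maker claims that partner; otherwise Maker proactively claims an available representative of the lowest $\tilde c_i$ he does not yet own, preferring the representative with the smaller first coordinate.

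The main obstacle is ensuring that Maker's representatives actually form a chain in $P$, not merely a transversal of the cells. The difficulty arises whenever two consecutive cells $\tilde c_i,\tilde c_{i+1}$ share a first-coordinate pair label: the two chosen representatives must then satisfy a monotonicity condition on the first coordinate, which can be violated if Breaker forces Maker to switch between even and odd representatives partway through such a run. I expect to address this by refining Maker's rule so that, within any maximal run of quotient chain elements sharing a pair label, Maker commits to a consistent lift (all evens, or evens followed by odds), and by using an induction on $d$ that isolates the first coordinate as a one-dimensional pairing subgame in which Maker plays first.
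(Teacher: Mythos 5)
Your upper bound is correct and uses essentially the paper's pairing (up to permuting coordinates so the long factor is last rather than first); your rank function $\phi$ is a slightly cleaner way to count than the paper's, which restricts to maximal chains and observes that the last element with last coordinate $2m$ and the first with last coordinate $2m+1$ form a Breaker pair.

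The lower bound, however, has a genuine gap --- the one you flag yourself --- and the repair you sketch does not close it. The core problem is that Maker does not get to choose which representative of a cell he owns: Breaker does, by taking one element of a pair and forcing Maker to claim the partner. (If Maker declines, Breaker takes the partner on a later turn and Maker loses the cell outright, which your accounting cannot absorb: the quotient chain has exactly $k-\FL{r/2}$ cells, with no slack.) So within a run of consecutive cells of your fixed quotient chain sharing first quotient coordinate $j$, Breaker first takes the odd representative of a \emph{high} cell, forcing Maker to $(2j,\mathbf{x}')$, and later takes the even representative of a \emph{low} cell, forcing Maker to $(2j+1,\mathbf{x})$ with $\mathbf{x}<\mathbf{x}'$; these two elements are incomparable, and no intermediate cell of the run has a representative fitting between them. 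A ``commitment to a consistent lift'' cannot prevent this because the choice of lift is not Maker's to make. Fixing the quotient chain in advance causes a second, independent failure when $r$ is odd: the cells with first quotient coordinate $\CL{r/2}-1$ are singletons, and Breaker can occupy one of them before Maker gets there. The paper avoids both issues by committing to no chain at all: it decomposes $P$ into the diagonal $Z=\{z_0,\ldots,z_{r-1}\}$ (with $z_j$ having $i$th coordinate $\min(j,r_i-1)$) and the open intervals $A_j=(z_{j-1},z_j)$, each a hypercube minus top and bottom, and inside each $A_j$ runs the \emph{adaptive} strategy of Lemma~2.1, which hits every level of $A_j$ while letting Breaker's moves dictate which chain is built; the only losses then occur on $Z$, where a pairing response limits Breaker to $\FL{r/2}$ elements. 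Some such adaptivity within blocks is the missing ingredient in your approach.
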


\begin{theorem}\label{wedgethm}
If $d\ge 14$, and $k\in\NN$, then in the ordered chain game on $\vey{d}_k$,
Walker can build a chain of size $k$ (hitting all levels).
\end{theorem}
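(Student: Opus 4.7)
\smallskip
\noindent\textbf{Proof plan.}
Since $\vey{d}_k$ is the truncation at level $k-1$ of the infinite poset $\NN^d$ under coordinate-wise order, it suffices to exhibit a Walker strategy that advances one level at a time indefinitely; restricting such a strategy to levels below $k$ wins the game on $\vey{d}_k$. The plan is to reduce this survival problem to Conway's Angel--Devil game and then invoke one of the known winning Angel strategies (Bowditch, M\'ath\'e, Kloster, G\'acs).

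The reduction projects Walker's position onto a low-dimensional integer grid by pairing coordinates. For $d=14$ the natural choice is
\[
\pi(p)=(p_1-p_2,\;p_3-p_4,\;\ldots,\;p_{13}-p_{14})\in\ZZ^7,
\]
under which each Walker cover move $p\mapsto p+e_j$ changes exactly one coordinate of $\pi(p)$ by $\pm 1$. So Walker's projected motion is an axis-parallel unit step of an Angel in $\ZZ^7$, and the whole Walker game is simulated by an Angel game on $\ZZ^7$. Blocker is translated into Devil by strengthening Blocker: whenever Blocker blocks a vertex $v$, we declare the whole fiber $\pi^{-1}(\pi(v))$ off-limits, i.e.\ the Devil eats cell $\pi(v)$. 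Any Angel winning strategy against the resulting (speed-$1$) Devil lifts back to a legal indefinite play for Walker, since every Walker-blocked vertex is already accounted for via its projection.

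The main obstacle is establishing that the Angel indeed wins this induced game on $\ZZ^7$ with only axis-parallel unit moves. In $\ZZ^2$ the analogous Angel of power $1$ loses, so extra ``room'' from the higher dimension is essential, and obtaining or citing a suitable high-dimensional Angel win is where the bound $d\ge 14$ enters, presumably by guaranteeing enough simultaneous escape directions to defeat a speed-$1$ Devil. If a clean high-dimensional result is not available in the form needed, the backup plan is to strengthen the Angel (say to M\'ath\'e's or Kloster's Angel of power $2$ on $\ZZ^2$) at the cost of executing one Angel move over several Walker turns, which presents the Angel with a sped-up Devil; the remaining coordinates then serve as slack, used as stall moves and as space to reroute around blocks discovered mid-simulation. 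Either way, the specific value $14$ should emerge from balancing Angel power, Devil speed, and the number of slack coordinates required to make the simulation faithful.
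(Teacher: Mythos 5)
Your reduction framework is sound and is in fact the same machinery the paper uses: your projection $\pi(p)=(p_1-p_2,\dots,p_{13}-p_{14})$ is exactly the map of Lemma~\ref{lem:movetrick} applied with $H=\ints^7$ and move set $M=\{\pm e_1,\dots,\pm e_7\}$ (so $\sizeof{M}=14$), and the ``block the whole fiber'' translation of Blocker into Devil is precisely what happens in the proof of Theorem~\ref{thm:mapwin}. The easy direction of the Walker/Angel correspondence (an Angel win lifts to a Walker win) is also fine. The problem is that you have aimed this machinery at the wrong target game, and the resulting key step is missing: you need the Angel with only the $14$ axis-parallel unit moves in $\ints^7$ to beat a speed-$1$ Devil, and you offer no proof or citation for this --- you say it holds ``presumably by guaranteeing enough simultaneous escape directions.'' That is the entire content of the theorem, not a detail; no such high-dimensional axis-move Angel theorem is available off the shelf, and nothing in your write-up supplies one. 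Your guess about where $14$ comes from is also not how the bound actually arises.

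The fix is to choose $H$ to be a game the Angel is \emph{known} to win and let the number of move types dictate the dimension, rather than fixing the projection first. The paper takes $H=\ints^2$ with the power-$2$ Angel, whose $24$ move vectors give a robust map from $\vey{24}$ via $\phi(x)=\sum_i x_i m_i$ (one wedge coordinate per Angel move type, so a single Walker step realizes a single Angel move --- no ``sped-up Devil'' issue arises); Kloster and M\'ath\'e supply the Angel win, and W\"astlund's refinement that the power-$2$ Angel can win using only the $14$ moves with horizontal displacement at most $2$ and vertical displacement at most $1$ is what brings the dimension down to $14$. Your ``backup plan'' gestures at this but does not execute it, and as stated (one Angel move spread over several Walker turns) it would indeed create the faster-Devil obstruction you mention; the coordinate-per-move-type map avoids that obstruction entirely. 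As written, the proposal has a genuine gap at its central step.
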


In small dimensions, Walker cannot guarantee hitting all levels.
In particular, when $d=2$ Walker can only get two-thirds of the levels.
Somewhere between dimension 2 and dimension 14 there is a ``phase transition''
after which Walker can hit all levels.  We do not know the value where this
occurs, but we conjecture that Walker wins already when $d=3$.

We begin in Section~\ref{chainprod} with the Maker-Breaker chain game on
products of chains.  The remainder of the paper addresses the Walker-Blocker
game.  In Section~\ref{d=2} we study the $2$-dimensional case for both wedges
and grids (products of equal chains).  For $\vey{2}_k$, the maximum size of a
chain that Walker can guarantee building is $\CL{2k/3}$.  In the product of
two chains of equal size, which is contained in a $2$-dimensional wedge, Walker
can still guarantee asymptotically $2/3$ of the levels; we prove this using a
``potential function'' argument.  Section~\ref{angel} relates the Walker-Blocker
game to Conway's famous Angel-Devil game.  We apply this relationship in
Section~\ref{wedge} to prove Theorem~\ref{wedgethm}.  We conjecture that
the conclusion of Theorem~\ref{wedgethm} holds in fact for $d\ge3$.  Finally,
Section~\ref{bias} addresses the biased game in which Blocker makes $b$ moves
after each move by Walker.

\section{Maker-Breaker on Chain-products}\label{chainprod}
In a product of chains, we use {\it level $\ell$} to denote the set of elements
whose entries sum to $\ell$.  A {\it successor} of a $d$-tuple $x$ is a
$d$-tuple $y$ such that $x\prec y$.  To evoke familiar terminology from games
on physical boards, we refer to an element chosen at a particular time as a
{\it move} and say that the player {\it plays} that move at that time.

In order to solve the Maker-Breaker game on products of finite chains,
we first solve the Walker-Blocker game on products of $2$-element chains.
We then apply this lemma to build an optimal strategy for Maker in the
unordered chain game on arbitrary finite chain-products.  Let
$[d]=\{1,\ldots,d\}$.  There is a natural isomorphism from $\B 2^d$ to the
lattice of subsets of $[d]$ in which each binary $d$-tuple $x$ is mapped to
$\{i\st x_i=1\}$.

\begin{lemma}\label{hypercube}
For $d\ge2$, let $P'$ be the poset obtained from $\B 2^d$ by deleting the top
element and the bottom element.  The maximum size of a chain in $P'$ is $d-1$,
and Walker can build a chain of size $d-1$ in $P'$, even if Blocker moves first.
\end{lemma}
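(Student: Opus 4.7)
The plan is to argue by induction on $d$. The upper bound $d-1$ on chain size is immediate: the nontrivial subsets of $[d]$ lie on levels $1,\ldots,d-1$ of $\mathbf{2}^d$, and any chain contains at most one element per level. The content of the lemma is thus Walker's ability to \emph{achieve} a chain of size $d-1$ against an opponent who may move first. The base case $d=2$ is trivial: $P'$ is a two-element antichain, and a single element suffices, which Walker can always claim.

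For the inductive step, suppose the result holds for $d-1$. Walker's opening is a level-$1$ element $\{i\}$, chosen as follows: if Walker moves first, set $i=1$; if Blocker moves first with move $B$, pick any $i\in[d]\setminus B$, which exists because $|B|\le d-1$. Let $U=\{S\subseteq[d]\st i\in S\}$, isomorphic to $\mathbf{2}^{d-1}$ via $S\mapsto S\setminus\{i\}$, with minimum $\{i\}$ (just played by Walker) and maximum $[d]$. The key point is that $U\setminus\{\{i\}\}$ is entirely unclaimed, because Blocker's possible opening move $B$ lies outside $U$ by the choice of $i$. Now Blocker moves, and Walker applies the inductive strategy inside the subposet $U\setminus\{\{i\},[d]\}$, which is a copy of $P'(\mathbf{2}^{d-1})$ with Blocker to move. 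Any Blocker move falling outside $U$ is ignored and treated as a pass in the subgame. By induction, Walker builds a chain of size $d-2$ strictly above $\{i\}$, and prepending $\{i\}$ produces a chain of size $d-1$ in $P'$, assembled from bottom to top as the ordered game requires.

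The one point that needs care is the legitimacy of letting Walker ignore Blocker's moves outside $U$: this is fine because passes can only hurt Blocker, and the inductive strategy reacts only to moves that could actually obstruct Walker's chosen chain. Everything else in the argument is bookkeeping: verifying that $\{i\}$ is the minimum of $U$, that the chain produced by induction lies above $\{i\}$, and that Walker's turn order in the subgame matches the ordered-chain requirement. The main obstacle, therefore, is simply ensuring that after Walker's opening the recursion really does begin in a clean copy of $P'(\mathbf{2}^{d-1})$ with Blocker to move; the choice $i\notin B$ is exactly what achieves this.
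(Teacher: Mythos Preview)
Your proof is correct. The paper argues directly rather than by induction: on his $k$th turn Walker plays a set $S$ at level $k$ maintaining the invariant that Blocker has occupied no successor of $S$; if Blocker then plays a superset $T$ of $S$, Walker picks any $e\in[d]\setminus T$ (which exists since $|T|\le d-1$) and moves to $S\cup\{e\}$, restoring the invariant. Unrolling your induction yields precisely this strategy---your choice of $i\notin B$ at each stage is exactly the paper's choice of $e\notin T$---so the two arguments package the same idea differently. The paper's invariant formulation is a little shorter and sidesteps the discussion of passes, while your inductive framing makes the self-similarity of the poset explicit and isolates the one nontrivial step (choosing $i\notin B$ so that the recursion begins in a clean copy of $P'(\mathbf{2}^{d-1})$).
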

\begin{proof}
We refer to the elements of $P'$ by the corresponding subsets in
$\{1,\ldots,d\}$.  On his $k$th turn, Walker plays a move $S$ at level $k$ such
that (i) $S$ is above all his previous moves and (ii) Blocker has played no
successor of $S$.  Successfully executing this strategy for $d-1$ turns builds
a chain of size $d-1$.  

Let $S$ be the previous move by Walker.  If Blocker responded with a move not
a successor of $S$, then Walker can add any element to $S$.  Otherwise, since
the highest level of $P'$ is $d-1$, the move by Blocker omits some $e\in [d]$.
Walker now plays $S\cup\{e\}$ and restores the property that no successor of
the current move has been played.
\end{proof}

Since Walker can build a chain hitting all levels in $P'$, we conclude also
that Maker can build a chain hitting all levels in the unordered game.  The
latter statement, along with the freedom to let Breaker move first, is what
we need to analyze arbitrary chain-products.  Since Maker need not take the
elements of a chain in order, Maker can build chains independently in different
copies of the poset $P'$ in Lemma~\ref{hypercube}; they will combine to form a
large chain.

To show optimality of the resulting strategy for Maker, we present a strategy
for Breaker.  Since every chain in a chain-product is contained in a longest
chain, it suffices to give a pairing strategy for Breaker that guarantees
blocking enough of every longest chain.  

\begin{theorem} \label{PoC:upper} \label{PoC:lower}
Let $P=\PE i1d \B r_i$.  In the unordered chain game on $P$ with $r=\max_i r_i$,
Maker can guarantee building a chain of size $k-\FL{r/2}$, where $k$ is the
maximum size of a chain in $P$, and Breaker can keep Maker from building any
larger chain.
\end{theorem}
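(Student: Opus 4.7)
My plan is to prove the two bounds separately.

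For the upper bound, Breaker uses a pairing strategy. Assume without loss of generality that $r_1 = r$, and define a matching $M$ on the Hasse diagram of $P$: pair $(x_1, x_2, \ldots, x_d)$ with $(x_1+1, x_2, \ldots, x_d)$ whenever $x_1$ is even and $x_1 + 1 < r$. Breaker responds to each Maker move in a pair of $M$ by claiming the partner, and otherwise plays arbitrarily. Any longest chain $C$ in $P$ traverses all $r$ values of coordinate $1$, so it uses exactly $r - 1$ covers in direction $e_1$; of these, the $\FL{r/2}$ covers of the form $(2j, y_2, \ldots, y_d) \prec (2j+1, y_2, \ldots, y_d)$ for $0 \le j < \FL{r/2}$ are pairs of $M$. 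Since Maker cannot own both elements of any pair, $\sizeof{X \cap C} \leq k - \FL{r/2}$, where $X$ is Maker's final vertex set. Because every chain lies in a longest chain, Maker cannot build a chain of size $k - \FL{r/2} + 1$.

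For the lower bound, Maker's strategy uses Lemma~\ref{hypercube} inside disjoint copies of $P'$ embedded in $P$, as sketched in the remarks following Lemma~\ref{hypercube}. I would identify sub-posets $C_1, \ldots, C_m$ of $P$ each isomorphic to $\B 2^d$, together with a ``spine'' chain $S$ in $P$ consisting of the tops and bottoms of the $C_j$'s, arranged so that each $C_j \setminus S$ is isomorphic to $P'$ and these interiors are pairwise disjoint. In the equal-factor case $r_1 = \cdots = r_d = r$, the natural choice is $C_j := \{j-1, j\}^d$ for $j = 1, \ldots, r - 1$ with $S$ the main diagonal $\{(j, \ldots, j) : 0 \leq j \leq r - 1\}$. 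Maker plays reactively: when Breaker plays inside some $C_j \setminus S$, Maker responds there using Lemma~\ref{hypercube}'s Walker strategy (whose ``Blocker first'' clause covers the case where Breaker enters the interior ahead of Maker); when Breaker plays a spine element, Maker plays the next eligible spine element. Summing contributions, Maker obtains $\CL{r/2}$ elements of $S$ and $d - 1$ elements inside each $C_j \setminus S$, for a total of $\CL{r/2} + (r-1)(d-1) = k - \FL{r/2}$; the staircase structure ensures these combine into a single chain in $P$.

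The hard part will be verifying this reactive scheme globally: Maker must acquire $\CL{r/2}$ spine elements despite Breaker's interference, have enough moves inside each $C_j \setminus S$ to run Lemma~\ref{hypercube}'s $(d-1)$-step strategy, and handle Breaker's moves outside the spine and all sub-cube interiors without jeopardizing any of the sub-games. A further bookkeeping step extends the construction to unequal factors: the cubical sub-cubes $C_j$ should be replaced by ``slab'' sub-posets that pair only in the coordinate achieving $r = \max_i r_i$ while spanning the full range in the others.
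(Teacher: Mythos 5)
Your upper bound is complete and is essentially the paper's own argument: the paper pairs $(x_1,\dots,x_{d-1},2j)$ with $(x_1,\dots,x_{d-1},2j+1)$ in the \emph{last} coordinate rather than the first, but the pairing strategy and the count of $\FL{r/2}$ forced misses along any maximal chain are identical to yours. The lower bound also follows the paper's plan (a diagonal spine whose open intervals are copies of $P'$, attacked via Lemma~\ref{hypercube}), but the two items you defer are precisely the substance of that half of the proof, and your proposed extension to unequal factors is not the right construction. Sub-posets that ``span the full range'' of the non-maximal coordinates are not isomorphic to $P'$, are far too large, and their internal chains do not concatenate with a spine into a single chain of the claimed size. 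What the paper does instead is take the spine $z_0\prec\cdots\prec z_{r-1}$ where the $i$th coordinate of $z_j$ is $\min(j,r_i-1)$: the staircase climbs diagonally in all coordinates until a coordinate saturates at $r_i-1$, then continues in the remaining ones. The open interval between $z_{j-1}$ and $z_j$ is then isomorphic to $\B 2^{d_j}$ minus its top and bottom for the appropriate (weakly decreasing) dimension $d_j$, Lemma~\ref{hypercube} applies in each interval, and the sizes telescope to $k-\FL{r/2}$ for arbitrary factors with no separate bookkeeping.

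The ``global verification'' you flag as hard is in fact short once two observations are in place, and you should make them explicit. First, the interval sub-games are played on pairwise disjoint vertex sets, and Lemma~\ref{hypercube} is stated so that Walker wins \emph{even if Blocker moves first}; hence Maker can respond purely locally (answer a Breaker move inside $A_j$ with the Lemma~\ref{hypercube} response in that same $A_j$, answer a Breaker spine move with another spine move, and spend all remaining moves extending some unfinished $A_j$ or the spine) without any sub-game ever being jeopardized by moves elsewhere. Second, the accounting is by levels, not by a fixed target chain: every level of $P$ contains either exactly one spine element or elements of exactly one $A_j$, Maker wins every level of every $A_j$ by the lemma, and the opening-move-plus-reply scheme on the spine guarantees Breaker acquires at most $\FL{r/2}$ of the $r$ spine elements. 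The only levels Maker can miss are those stolen spine levels, which yields the bound directly. As written, your proposal asserts the total $\CL{r/2}+(r-1)(d-1)$ without establishing either of these points, so the lower-bound half remains a sketch rather than a proof.
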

\begin{proof}
The elements of $P$ are the $d$-tuples $x$ such that $0\le x_i<r_i$ for
$1\le i\le d$.  By symmetry, we may assume that $r=r_d=\max_i r_i$.  For
$0\le j< r$, let $z_j$ be the element of $P$ whose $i$th coordinate is
$\min(j,r_i-1)$.  For $1\le j< r$, let $A_j$ be the subposet of $P$ consisting
of all $x$ such that $z_{j-1}\prec x\prec z_j$.  Note that
$z_0\prec \cdots\prec z_{r-1}$, that $\VEC A1{r-1}$ are pairwise disjoint, and
that each $A_j$ is isomorphic to the poset $P'$ of Lemma~\ref{hypercube} for
some dimension (as $j$ increases beyond some $r_i$, the dimension decreases).
Fig.~\ref{subcubes} illustrates $\VEC A1{r-1}$.

The key to Maker's strategy is that chains in $\VEC A1{r-1}$ combine to form a
longer chain in $P$.  Let $Z = \{\VEC z0{r-1}\}$; these are the bold elements
in Fig.~\ref{subcubes}.  Maker begins by playing an element in $Z$.  When
Breaker plays an element in $Z$, Maker responds by playing another element in
$Z$ if one is available.  Maker treats each subposet $A_j$ as an instance of
the poset $P'$ of Lemma~\ref{hypercube}; when Breaker plays in $A_j$, Maker
responds using the strategy of Lemma~\ref{hypercube}.  When Breaker plays any
other move, Maker plays to increase the chain in some $A_j$ or $Z$.

By Lemma~\ref{hypercube}, Maker obtains in each $A_j$ a chain hitting all 
levels.  These combine with $Z$ to form a long chain in $P$.  The only levels
that Maker misses are those containing an element of $Z$ played by Breaker.
Maker's strategy ensures that Breaker plays at most $\FL{|Z|/2}$ such moves.
Since $|Z|=r$, the bound follows.

To prove optimality, Breaker uses a pairing strategy.  For all $j$ with
$0\le j<\FL{r/2}$ and all $(\VEC x1{d-1})$, Breaker pairs the element
$(\VEC x1{d-1},2j)$ with the element $(\VEC x1{d-1},2j+1)$.  When Maker plays a
paired element, Breaker plays its mate; when Maker plays an unpaired element,
Breaker responds arbitrarily.  We show that Maker misses at least $\FL{r/2}$
elements from every maximal chain.

Given a maximal chain $X$, let $X_j$ be the subchain of $X$ consisting of all
elements whose last coordinate has value $j$, for $0\le j\le r-1$.  Let $x$ be
the last element of $X$ in $X_{2m}$, and let $y$ be the first element of 
$X$ in $X_{2m+1}$ (note that $X$ cannot skip any $X_j$).  Since Breaker has
paired $x$ with $y$, Maker misses at least one of these.  Because there are
$\FL{r/2}$ even integers that are at least $0$ and less than $r-1$, the theorem
follows.
\end{proof}

\vspace{-.5pc}
\begin{figure}[hbt]
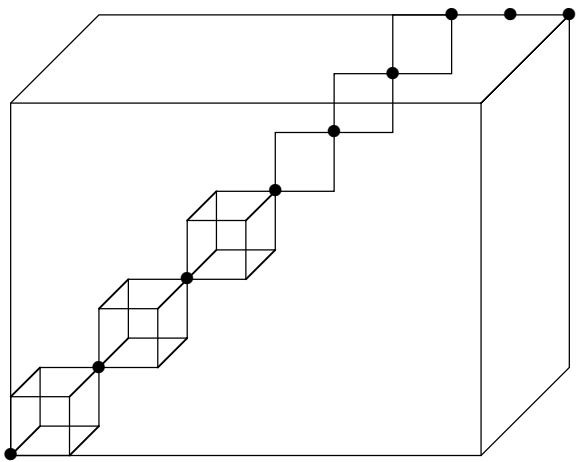

\def\bu{{\Huge \bullet}}
\gpic{
\expandafter\ifx\csname graph\endcsname\relax \csname newbox\endcsname\graph\fi
\expandafter\ifx\csname graphtemp\endcsname\relax \csname newdimen\endcsname\graphtemp\fi
\setbox\graph=\vtop{\vskip 0pt\hbox{%
    \special{pn 8}%
    \special{pa 38 2346}%
    \special{pa 2500 2346}%
    \special{pa 2962 1885}%
    \special{pa 2962 38}%
    \special{pa 500 38}%
    \special{pa 38 500}%
    \special{pa 38 2346}%
    \special{fp}%
    \special{pa 2500 2346}%
    \special{pa 2500 500}%
    \special{pa 2962 38}%
    \special{pa 2500 500}%
    \special{pa 38 500}%
    \special{fp}%
    \special{pa 38 2346}%
    \special{pa 346 2346}%
    \special{pa 346 2038}%
    \special{pa 38 2038}%
    \special{pa 38 2346}%
    \special{fp}%
    \special{pa 192 2192}%
    \special{pa 500 2192}%
    \special{pa 500 1885}%
    \special{pa 192 1885}%
    \special{pa 192 2192}%
    \special{fp}%
    \special{pa 500 1885}%
    \special{pa 808 1885}%
    \special{pa 808 1577}%
    \special{pa 500 1577}%
    \special{pa 500 1885}%
    \special{fp}%
    \special{pa 654 1731}%
    \special{pa 962 1731}%
    \special{pa 962 1423}%
    \special{pa 654 1423}%
    \special{pa 654 1731}%
    \special{fp}%
    \special{pa 962 1423}%
    \special{pa 1269 1423}%
    \special{pa 1269 1115}%
    \special{pa 962 1115}%
    \special{pa 962 1423}%
    \special{fp}%
    \special{pa 1115 1269}%
    \special{pa 1423 1269}%
    \special{pa 1423 962}%
    \special{pa 1115 962}%
    \special{pa 1115 1269}%
    \special{fp}%
    \special{pa 1423 962}%
    \special{pa 1731 962}%
    \special{pa 1731 654}%
    \special{pa 1423 654}%
    \special{pa 1423 962}%
    \special{fp}%
    \special{pa 1731 654}%
    \special{pa 2038 654}%
    \special{pa 2038 346}%
    \special{pa 1731 346}%
    \special{pa 1731 654}%
    \special{fp}%
    \special{pa 2038 346}%
    \special{pa 2346 346}%
    \special{pa 2346 38}%
    \special{pa 2038 38}%
    \special{pa 2038 346}%
    \special{fp}%
    \special{pn 11}%
    \special{pa 38 2346}%
    \special{pa 192 2192}%
    \special{fp}%
    \special{pa 346 2346}%
    \special{pa 500 2192}%
    \special{fp}%
    \special{pa 346 2038}%
    \special{pa 500 1885}%
    \special{fp}%
    \special{pa 38 2038}%
    \special{pa 192 1885}%
    \special{fp}%
    \special{pa 500 1885}%
    \special{pa 654 1731}%
    \special{fp}%
    \special{pa 808 1885}%
    \special{pa 962 1731}%
    \special{fp}%
    \special{pa 808 1577}%
    \special{pa 962 1423}%
    \special{fp}%
    \special{pa 500 1577}%
    \special{pa 654 1423}%
    \special{fp}%
    \special{pa 962 1423}%
    \special{pa 1115 1269}%
    \special{fp}%
    \special{pa 1269 1423}%
    \special{pa 1423 1269}%
    \special{fp}%
    \special{pa 1269 1115}%
    \special{pa 1423 962}%
    \special{fp}%
    \special{pa 962 1115}%
    \special{pa 1115 962}%
    \special{fp}%
    \graphtemp=.5ex\advance\graphtemp by 2.346in
    \rlap{\kern 0.038in\lower\graphtemp\hbox to 0pt{\hss $\bu$\hss}}%
    \graphtemp=.5ex\advance\graphtemp by 1.885in
    \rlap{\kern 0.500in\lower\graphtemp\hbox to 0pt{\hss $\bu$\hss}}%
    \graphtemp=.5ex\advance\graphtemp by 1.423in
    \rlap{\kern 0.962in\lower\graphtemp\hbox to 0pt{\hss $\bu$\hss}}%
    \graphtemp=.5ex\advance\graphtemp by 0.962in
    \rlap{\kern 1.423in\lower\graphtemp\hbox to 0pt{\hss $\bu$\hss}}%
    \graphtemp=.5ex\advance\graphtemp by 0.654in
    \rlap{\kern 1.731in\lower\graphtemp\hbox to 0pt{\hss $\bu$\hss}}%
    \graphtemp=.5ex\advance\graphtemp by 0.346in
    \rlap{\kern 2.038in\lower\graphtemp\hbox to 0pt{\hss $\bu$\hss}}%
    \graphtemp=.5ex\advance\graphtemp by 0.038in
    \rlap{\kern 2.346in\lower\graphtemp\hbox to 0pt{\hss $\bu$\hss}}%
    \graphtemp=.5ex\advance\graphtemp by 0.038in
    \rlap{\kern 2.654in\lower\graphtemp\hbox to 0pt{\hss $\bu$\hss}}%
    \graphtemp=.5ex\advance\graphtemp by 0.038in
    \rlap{\kern 2.962in\lower\graphtemp\hbox to 0pt{\hss $\bu$\hss}}%
    \hbox{\vrule depth2.385in width0pt height 0pt}%
    \kern 3.000in
  }%
}%
}
\vspace{-1pc}
\caption{Maker strategy for chain-products}\label{subcubes}
\end{figure}

\section{Walker-Blocker in Two Dimensions}\label{d=2}
For the Walker-Blocker game, wedges are easier to analyze than chain-products,
because the poset formed by the successors of any element is isomorphic to a
truncation of the same wedge by discarding the highest levels.
This allows Walker to define a strategy locally.  In a chain-product, when the
top of a growing chain reaches the maximum in a given coordinate, no further
moves in that direction are possible.  Breaker may be able to take advantage
of Walker being ``trapped in a corner''.  To overcome this, Walker may need
to look farther ahead to plan a strategy.

In this section, we first give an exact solution for the Walker-Blocker game
on $\vey{2}_k$.  We then express Walker's strategy in a more general way using
a ``potential function'' to show that asymptotically as big a chain can be
built in the game on the product of two $(k+1)$-element chains as on 
the wedge $\vey{2}_{2k+1}$ that contains it.

\begin{theorem} \label{wedge2}
In the ordered chain game on $\vey{2}_k$, Walker can build a chain of size
$\CL{2k/3}$ in $\CL{2k/3}$ moves.  Also, Blocker can prevent Walker from
building a larger chain.
\end{theorem}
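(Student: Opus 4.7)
I will prove the lower bound (Walker achieves $\lceil 2k/3\rceil$) and the upper bound (Blocker prevents more) in turn.

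For the lower bound, Walker plays a greedy strategy: his first move is $(0,0)$, and thereafter he plays any lowest-level unblocked element strictly above his current position. Let his chain be $p_1,\ldots,p_v$ with $p_1=(0,0)$ at level $0$, let $\ell_i$ denote the level of $p_i$, and let $d_i=\ell_i-\ell_{i-1}$ be the advance on move $i$. When $d_i\ge 2$, greediness guarantees that every one of the $\binom{d_i+1}{2}-1$ elements above $p_{i-1}$ at the $d_i-1$ intermediate levels is already blocked. The elementary identity
\[
\binom{d+1}{2}-1-2(d-1)=\tfrac{(d-1)(d-2)}{2}\ge 0
\]
lets me charge at least $2(d_i-1)$ distinct Blocker moves to each such advance. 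If $\sigma=\sum_{i\ge 2}(d_i-1)$ is the total number of skipped levels, this gives $2\sigma\le v$ (since Blocker plays at most $v$ moves). Combined with the bookkeeping $\ell_v=(v-1)+\sigma\le k-1$, I conclude $v\ge 2k/3$ and hence $v\ge\lceil 2k/3\rceil$. If Walker instead gets stuck at some level $\ell_v<k-1$, the $\binom{k-\ell_v+1}{2}-1$ positions blocked above his stuck spot contribute additional Blocker-move budget, and the same identity $(t-1)(t-2)\ge 0$ with $t=k-\ell_v\ge 2$ preserves the bound.

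For the upper bound, Blocker's strategy enforces the invariant $\ell_{2j+1}-\ell_{2j-1}\ge 3$ for all $j\ge 1$. Iterating this gives $\ell_{2m+1}\ge 3m$ and $\ell_{2m+2}\ge 3m+1$, so $\ell_v\le k-1$ together with a short case-split on the parity of $v$ forces $v\le\lceil 2k/3\rceil$. The strategy itself pairs Walker's moves. After Walker's $(2j-1)$-th move at $(a,b)$, Blocker plays the diagonal element $(a+1,b+1)$, at level $\ell_{2j-1}+2$. Upon Walker's $(2j)$-th move at $p_{2j}$: if $\ell_{2j}=\ell_{2j-1}+1$ (so $p_{2j}\in\{(a+1,b),(a,b+1)\}$), Blocker plays the remaining level-$(\ell_{2j-1}+2)$ successor of $p_{2j}$, blocking both of those successors and forcing $\ell_{2j+1}\ge \ell_{2j-1}+3$; if $\ell_{2j}\ge\ell_{2j-1}+2$, the invariant already holds and Blocker's response move may be played arbitrarily.

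The main obstacle is making Blocker's strategy airtight across all of Walker's deviations, in particular the ``free response'' branch where the pair structure may shift; I must verify that the invariant $\ell_{2j+1}-\ell_{2j-1}\ge 3$ continues to thread through subsequent pairs regardless of how Walker advances. The Walker half, by contrast, is a fairly direct amortized count once the combinatorial inequality $\binom{d+1}{2}-1\ge 2(d-1)$ is recognized.
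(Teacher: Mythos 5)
Your lower bound is correct and is essentially the paper's argument: the same greedy strategy, with the amortized count done per jump via the inequality $\binom{d+1}{2}-1\ge 2(d-1)$, where the paper instead counts two distinct Blocker moves inside each skipped level (the skipped levels being disjoint); both give $2\sigma\le v$ and hence $v\ge\lceil 2k/3\rceil$, and your treatment of the ``stuck'' case is fine.

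The upper bound, however, has a genuine gap, and it is exactly the one you flagged: the parity-based pairing does not merely need more verification under deviations, it fails. Walker may play moves that do not extend his chain, and by inserting one such garbage move per cycle he desynchronizes your pairing. Concretely: Walker plays $(0,0)$ (you answer $(1,1)$), then a far-away garbage element (your free branch), then $(1,0)$ --- which your strategy indexes as an \emph{odd} move, so you answer with the diagonal $(2,1)$ rather than with $(2,0)$ --- and then $(2,0)$. His in-order chain now occupies the three consecutive levels $0,1,2$; repeating this four-move cycle starting again from level $4$ gives a chain hitting levels $4t,4t+1,4t+2$ for every $t$, i.e.\ roughly $3k/4>\lceil 2k/3\rceil$ of the levels. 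The invariant $\ell_{2j+1}-\ell_{2j-1}\ge 3$ is simply false once the $\ell_i$ are the levels of Walker's raw move sequence rather than of a chain, and the longest in-order chain need not even consist of consecutive Walker moves. The repair is to make Blocker's response stateless and local, depending only on Walker's most recent move $(a,b)$ and the current board (this is the paper's strategy): if exactly one of $(a+1,b),(a,b+1)$ is already occupied, take the other; if neither is, take $(a+1,b+1)$ if available; otherwise play arbitrarily. One then argues about an arbitrary chain $C$ occupied by Walker in time order: for each $(a,b)\in C$, either both covers of $(a,b)$ are unavailable to $C$ once Blocker answers the move $(a,b)$ (so $C$ skips level $a+b+1$), or the diagonal block guarantees that when Walker later plays the level-$(a+b+1)$ element of $C$, Blocker's answer leaves both of \emph{its} covers unavailable (so $C$ skips level $a+b+2$). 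Hence $C$ never meets three consecutive levels, which caps its size at $\lceil 2k/3\rceil$.
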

\begin{proof}
We present a strategy in which Walker follows a single chain, with no wasted
moves.  Walker first plays $(0,0)$.  For each subsequent move, Walker plays a
successor of his previous move; among all unchosen successors, he plays one
at the lowest level.  A level containing a move by Walker is ``won by Walker''.
After a move $(a,b)$, Walker next plays at level $a+b+1$ unless Blocker has
already played both $(a+1,b)$ and $(a,b+1)$.  We then say that Blocker wins
level $a+b+1$.

Since Blocker spends at least two moves in a level to win it, while Walker
spends only one move per level won, the number of levels that have been won
by Walker is always at least twice the number won by Blocker.  At the end of
the game all $k$ levels have been won by one player or the other; hence
Walker has won at least $\CL{2k/3}$ levels.

For the upper bound, we present a strategy for Blocker to keep Walker from
building a larger chain.  If Walker's previous move was at $(a,b)$ and
exactly one of $(a+1,b)$ and $(a,b+1)$ have been played, then Blocker plays
the other.  If neither of them has been played, then Blocker plays $(a+1,b+1)$,
if available.  Otherwise, Blocker plays an arbitrary move.

Once the game has finished, let $(a,b)$ be an element on a largest chain $C$
that was occupied by Walker in order.  If when Walker played $(a,b)$ one of
$(a+1,b)$ and $(a,b+1)$ had already been played, then $C$ has no element from
level $a+b+1$.  If $C$ has an element $x$ from level $a+b+1$, then before $x$
was played the element $(a+1,b+1)$ was played by one of the players.  Blocker
next ensures that the other successor of $x$ at level $a+b+2$ is occupied, thus
preventing $C$ from having an element from level $a+b+2$.

We have shown that Blocker's strategy prevents Walker from building a chain
in order that hits three consecutive levels.  Hence Walker wins at most
$\CL{2k/3}$ levels.
\end{proof}

In efficient strategies, where all moves by Walker form a chain, we refer to
the most recent move by Walker as the {\it head} of the chain.  A more global
view of the strategy for Walker uses a potential function to measure the
difficulty that Walker faces in the levels above the head.

We define a potential function to measure the future levels that Walker may
need to skip.  Thus Walker wants to keep the potential small.  When Walker
skips a level, the potential will decrease by 1.  Other moves by Walker will
not increase the potential.  A move by Blocker will increase the potential by
at most $1/2$.  We design such a potential and strategy for $\vey{2}_{2k+1}$
and use it to show that even when the game is restricted to the subposet
$\Bkpsq$, Walker can still win asymptotically $2/3$ of the levels.

Blocker's move at a position $(c,d)$ makes it harder for Walker to win level
$c+d$.  To measure this difficulty when the head is at $(a,b)$, we define the
{\it influence} of $(c,d)$ on $(a,b)$, where $a\le c$ and $b\le d$, to be
$\CH{c'+d'}{d'}2^{-(c'+d')}$, where $(c',d')=(c,d)-(a,b)$ (the influence is
$0$ if $a>c$ or $b>d$).  We write $f_{a,b}(c,d)$ for the influence of $(c,d)$
on $(a,b)$.  Define the {\it potential} $\Phi_{a,b}$ to be the total influence
on $(a,b)$ of the moves Blocker has played.  Large potential is good for
Blocker.

To motivate these definitions, note that the influence of $(c,d)$ on $(a,b)$
equals the probability that a random walk from $(a,b)$ to level $c+d$ will end
at position $(c,d)$, where the walk iteratively increases a randomly chosen
coordinate by 1.  Let $(a,b)$ be the current head of the chain, and let $(c,d)$
be another position.  The average of the influences of $(c,d)$ on $(a+1,b)$
and $(a,b+1)$ equals the influence of $(c,d)$ on $(a,b)$.  Walker will want to
choose the option that produces the smaller potential.
 
\begin{theorem}
\label{mainthm}
In the ordered chain game on $\Bkpsq$, Walker can build a chain hitting
more than $\frac23(2k+1)-4\sqrt{k\ln k}$ of the $2k+1$ levels, and this is
asymptotically sharp.
\end{theorem}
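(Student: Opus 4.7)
The plan is to execute the potential-minimizing strategy set up just before the theorem: Walker starts at $(0,0)$, and from each head $(a,b)$ plays whichever of the successors $(a+1,b)$ and $(a,b+1)$ that lies in $\Bkpsq$ and has smaller $\Phi$, skipping to the analogous min-$\Phi$ successor at level $a+b+2$ when both are Blocker's.

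I would first verify three bookkeeping inequalities that extend the wedge argument of Theorem~\ref{wedge2}. The averaging identity $\Phi_{a+1,b}+\Phi_{a,b+1}=2\Phi_{a,b}$ (Pascal's identity applied term by term to the random-walk interpretation of $f$) shows that a min-potential move never raises $\Phi$. A single Blocker move raises $\Phi$ by at most $\tfrac12$, attained only when Blocker plays an immediate successor of the head. Using the iterated identity $\tfrac14(f_{a+2,b}+2f_{a+1,b+1}+f_{a,b+2})=f_{a,b}$, a forced skip drops $\Phi$ by at least $1$: the two Blocker pieces at $(a+1,b)$ and $(a,b+1)$ together contribute $1$ to $\Phi_{a,b}$ but $0$ to each of the three candidate new heads. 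Summing these inequalities over a game of length at most $2k+1$, provided the skip-successors are always present in $\Bkpsq$, recovers the $\tfrac{2}{3}(2k+1)$ count of Theorem~\ref{wedge2}.

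The feature unique to $\Bkpsq$ is the top-corner boundary, where one or both successors of the head can fail to exist and Blocker can stall Walker with fewer than two moves. To handle this I would show that the potential-minimizing rule keeps the head close to the main diagonal, specifically that $|a-b|\le 2\sqrt{k\ln k}$ at every head during the game. The random-walk interpretation suggests that the gap $\Phi_{a+1,b}-\Phi_{a,b+1}$ is biased in the same direction as $a-b$ once Blocker has placed an asymmetric cluster of moves, so that Walker's choice of the smaller potential is pulled toward equalizing $a$ and $b$. An Azuma-Hoeffding-style estimate on the one-step change of $a-b$ then produces the desired bound. This confines the head to the interior rectangle $\{a,b\le k-2\sqrt{k\ln k}\}$ for all but $O(\sqrt{k\ln k})$ final rounds, so the wedge accounting applies in the interior and Walker forfeits at most $4\sqrt{k\ln k}$ additional levels while the head hugs the boundary.

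The main obstacle is turning the heuristic ``min-$\Phi$ moves equalize $a-b$'' into a worst-case concentration statement against an adversary: one must check that Blocker cannot coordinate his moves so as to drive both $|a-b|$ and $\Phi$ up simultaneously despite seeing Walker's deterministic responses. I expect this to reduce to a one-step estimate on the change in $a-b$ that exploits the $\Phi$-optimality of Walker's choice, together with a bounded-differences tail inequality. Asymptotic sharpness follows from the Blocker strategy of Theorem~\ref{wedge2} executed inside $\Bkpsq$, with the rare unavailable $(a+1,b+1)$ replaced by an arbitrary move near the corner at a cost of only $o(k)$ levels, so the $\tfrac{2}{3}(2k+1)$ upper bound is matched up to an $o(k)$ correction.
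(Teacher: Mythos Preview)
Your bookkeeping on the potential (averaging, $+\tfrac12$ per Blocker move, $-1$ per skip) matches the paper, and the upper bound is fine since $\Bkpsq$ sits inside $\vey{2}_{2k+1}$. The gap is in your treatment of the boundary.

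The claim that the min-$\Phi$ rule forces $|a-b|\le 2\sqrt{k\ln k}$ is false. With no boundary information built into $\Phi$, Blocker can steer Walker straight to an edge at essentially zero potential cost. Concretely: after Walker plays $(a,0)$, let Blocker play $(a,N)$ for some large fixed $N$. Then $f_{a+1,0}(a,N)=0$ while $f_{a,1}(a,N)=2^{-(N-1)}>0$, so $\Phi_{a+1,0}<\Phi_{a,1}$ and Walker moves to $(a+1,0)$. Iterating, Walker reaches $(k,0)$ after $k$ rounds with $\Phi$ still of order $k\,2^{-N}$, which you can make arbitrarily small. So there is no drift toward the diagonal, and no bounded-differences or Azuma-type argument can manufacture one: the process $a-b$ is deterministic given Blocker's play, and Blocker controls its sign. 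Your ``main obstacle'' is not a technicality; it is exactly where the argument breaks.

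The paper's device is to encode the boundary into $\Phi$ rather than into a separate trajectory estimate: play the game on $\vey{2}_{2k+1}$ but hand Blocker, for free at time zero, every point of the wedge outside $\Bkpsq$. The same three inequalities then do all the work, because the pre-burned exterior makes $\Phi$ blow up near the edges (for instance $\Phi_{k,0}\approx k-1$), so the min-$\Phi$ rule automatically avoids them. The only new computation is the initial potential $\Phi_{0,0}$ contributed by the pre-burned points, which is $\sum_{n=k+1}^{2k}\P\!\left(\,|X_n-n/2|>k-n/2\,\right)$ for $X_n$ binomial; a Chernoff bound gives this is below $4\sqrt{k\ln k}$, and that constant is the source of the error term in the statement.
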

\begin{proof}
Since Blocker can limit Walker to winning $\CL{(2/3)(2k+1)}$ levels in 
$\vey{2}_{2k+1}$, Walker can do no better on the subposet $\Bkpsq$.
Hence it suffices to prove the lower bound.

Consider the game on $\vey{2}_{2k+1}$.  At a given time, let $S(a,b)$ denote
the set of elements at or above $(a,b)$ that Blocker has played.  Recall that
the potential $\Phi_{a,b}$ at a point $(a,b)$ is 
$\sum_{(c,d)\in S(a,b)}f_{a,b}(c,d)$.

We have noted that always
$f_{a,b}(c,d)=\frac{1}{2}\left[f_{a+1,b}(c,d) + f_{a,b+1}(c,d)\right]$.
When the head of the chain is at $(a,b)$, we have $(a,b)\notin S(a,b)$, and
hence summing over $S(a,b)$ yields
$\Phi_{(a,b)} = \frac12(\Phi_{(a+1,b)} + \Phi_{(a,b+1)})$.
To keep the potential small, Walker wants to move to whichever of $(a+1,b)$ and
$(a,b+1)$ has smaller potential.

If this strategy directs Walker to play a move $(a',b')$ that Blocker already
played (that is, $(a',b')\in S(a,b)$), then Walker simply computes the choice
the strategy would make from $(a',b')$ instead.  The influence of $(a',b')$ on
$\Phi_{a',b'}$ is $1$, and by skipping $(a',b')$ this influence is lost.  Thus
when Walker chooses a successor of $(a',b')$, the potential decreases by (at
least) 1.  Walker may skip several levels before the preferred option is
available, decreasing the potential by $1$ for each level skipped.

Since Blocker cannot play where Walker just played, the increase in potential
from Blocker's move is at most $1/2$.  Since the potential is $0$ at the start
and the end of the game, Blocker must make at least two moves for every level
skipped by Walker.  Walker wins a level for each move played, so Walker wins at
least twice as many levels as are skipped.

In order to restrict play to $\Bkpsq$, which is a subposet of
$\vey{2}_{2k+1}$, we grant Blocker initially all moves that are outside
$\Bkpsq$.  The key observation, which we formalize below, is that all of
these free moves for Blocker increase the potential only by $o(k)$.  Since
again the potential decreases to $0$ at the end, Walker loses at most $o(k)$
more levels than in the original game on $\vey{2}_{2k+1}$, and hence Walker
gets at least the fraction $2/3 - o(1)$ of the $2k+1$ levels.  (Here $o(g(k))$
denotes any function of $k$ whose ratio to $g(k)$ tends to $0$ as $k\to\infty$.)

To bound the initial influence of the forbidden moves, recall that $f(c,d)$ is
the probability that a random walk from $(0,0)$ to level $c+d$ ends at $(c,d)$.
The distribution of the endpoint is the standard binomial distribution for
$c+d$ trials.  Let $X_n$ be the binomial random variable counting the heads in
$n$ flips of a fair coin.  The initial value of the potential function is
$\SE n{k+1}{2k} \P(\C{X_n-n/2}> k-n/2)$.  Let $k'=\CL{\sqrt{2k\ln k}}$ and
$n_0=2k-k'$.  For each $n$ greater than $n_0$, the probability is at most $1$.
For $n\le n_0$, we use the well-known Chernoff bound.

The Chernoff bound states that $\P(\C{X_n-n/2} > nt)\le 2e^{-2nt^2}$; we apply
it with $t=k/n-1/2$.  Since $2nt^2$ increases as $n$ decreases, we may
assume $n=n_0$ and use $2e^{-2n_0t^2}$ as a bound on the contribution from
these terms.  We have $n_0=2k(1-x)$, where $x=k'/2k>\sqrt{\ln k/(2k)}$.  Also,
$2t=\frac1{1-x}-1=\frac x{1-x}$.  We compute
$$
2n_0t^2=\frac{n_0}2(2t)^2= k(1-x)\frac{x^2}{(1-x)^2} > kx^2 > \frac12 \ln k.
$$
Thus $2e^{-2n_0t^2}<2k^{-1/2}$, bounding the total contribution from these
terms by $2\sqrt k$.  From the $k'$ terms with largest $n$, the bound on the
total is $\CL{\sqrt{2k\ln k}}$.  Hence the initial potential is less than
$4\sqrt{k\ln k}$.  As a result, Walker misses fewer than $4\sqrt{k\ln k}$
levels in addition to the $(1/3)(2k+1)$ levels missed by the earlier argument.
\end{proof}

\section{Angel-Devil Games on Digraphs}\label{angel}
In this section, we define a slightly more general version of Conway's famous
Angel-Devil game~\cite{Conway}.  We show in the next section that this game is
closely related to the Walker-Blocker game on $\vey{d}_k$, and we will
translate known results about the Angel-Devil game to apply there.

A \emph{rooted digraph} is a digraph $G$, possibly with loops, with one vertex
designated as the \emph{root} or \emph{start vertex}.
The \emph{Angel-Devil game} is played on an infinite rooted digraph $G$ by two
players, Angel and Devil.  Angel and Devil alternate turns, with Angel moving
first.  Each vertex of $G$ is either \emph{burned} or \emph{unburned}, with all
vertices initially unburned.  Angel starts at the root of $G$.  At each turn,
Angel moves to an unburned out-neighbor of his current position.  At each turn,
Devil \emph{burns} at most one vertex, forever denying Angel its use.

Devil wins if Angel is ever unable to move (when all out-neighbors of his
current position are burned).  Angel wins by having a strategy to move forever.
When every vertex of $G$ has finite outdegree, an equivalent statement of the
victory condition for Angel is that for every natural number $n$, Angel has a
strategy to guarantee moving for $n$ turns.
\smallskip

A poset is {\it graded} if all maximal chains joining any two elements have the
same length; like those we have discussed, all graded posets have well-defined
levels.  A {\it rooted poset} is a graded poset with a unique minimal element
$x_0$ called the {\it root}.  A {\it $k$-prefix} is a chain of size $k$
consisting of elements $\VEC x0{k-1}$ such that $x_0$ is the root and $x_i$
covers $x_{i-1}$ for $1\le i\le k$.  The top element of a prefix is its
{\it head}; {\it climbing} a prefix means following it in order.

The \emph{$k$-prefix game} on a rooted poset $P$ is the Walker-Blocker game in
which Walker must climb a $k$-prefix to win.  If Walker wins the $k$-prefix
game on $P$, then Walker also wins the ordered $k$-chain game, since a
$k$-prefix is a $k$-chain with the additional requirements of skipping no
levels and starting at the bottom.  We say that Blocker wins the \emph{prefix
game} on $P$ (without a specified parameter) if Blocker wins the $k$-prefix
game on $P$ for some $k$.

We prove that the prefix game on a rooted poset $P$ is equivalent to the
Angel-Devil game on the rooted cover digraph of $P$.  This is not immediately
obvious because, unlike Angel, Walker can backtrack and take an alternative
climbing route when blocked.  Thus Walker is more powerful than Angel in the
corresponding game, and it is easy to obtain a winning strategy for Walker from
a winning strategy for Angel.

\begin{theorem}\label{MB-AD_equiv}
Let $P$ be a rooted poset with minimal element $x_0$, and assume that every
element of $P$ is covered by finitely many elements.  Let $G$ be the rooted
digraph with start vertex $x_0$ that is the cover digraph of $P$.  Blocker wins
the prefix game on $P$ if and only if Devil wins the Angel-Devil game on $G$.
\end{theorem}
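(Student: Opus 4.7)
The plan is to prove each direction of the equivalence by translating a winning strategy in one game into a winning strategy in the other, using the correspondence that matches Walker's first move (playing $x_0$) with Angel's starting position at the root, identifies Walker's subsequent moves with Angel's successive positions, and identifies Blocker's plays with Devil's burns at the same vertices. For the direction ``Angel wins the AD game implies Walker wins the $k$-prefix game for every $k$,'' I would have Walker run a virtual Angel-Devil game using Angel's winning strategy $\sigma$. Walker plays $x_0$ first; on each subsequent turn, he adds Blocker's most recent play to a virtual burn set, consults $\sigma$ for Angel's next move, and plays that vertex. Since $\sigma$ guarantees Angel always moves to an unburned out-neighbor, the prescribed vertex is a cover of Walker's previous head and has not been chosen by either player, so Walker legally extends his prefix indefinitely.

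For the direction ``Devil wins the AD game implies Blocker wins the prefix game,'' let $\tau$ be Devil's winning strategy, and let $K$ be a uniform bound on the number of Angel moves before being trapped; such a $K$ exists by a K\"onig's-Lemma argument on the finitely-branching game tree, using the hypothesis that every element of $P$ has finitely many covers. Blocker then simulates a virtual AD game: whenever Walker plays a cover of the current virtual Angel position, Blocker interprets it as Angel's next move and responds with the burn prescribed by $\tau$. When Walker plays a vertex that does not extend the climbing prefix, Blocker responds with any convenient unchosen vertex and leaves the virtual state unchanged. Because Angel is trapped within $K$ virtual moves, Walker's prefix cannot exceed $K+1$ levels, and Blocker wins the $(K+2)$-prefix game.

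The principal obstacle is a mild timing asymmetry between the two games: Walker moves first (placing $x_0$) in the prefix game while in the AD game Angel starts at the root without a formal first move, so Walker's $(i+1)$-th move pairs with Angel's $i$-th move but Blocker's $i$-th play pairs with Devil's $i$-th burn, leaving Blocker's first play unmatched. I expect the delicate step to be verifying that the vertex Angel's strategy selects is always actually unchosen at the moment Walker needs to play it, despite the apparent extra burn contributed by Blocker's first play; this requires a careful accounting showing that, at each simulation step, the virtual burn set either agrees with Blocker's previous plays or extends them only by vertices that do not affect $\sigma$. The analogous issue in the second direction, handling Walker's backtracks without desynchronizing the Devil simulation, is more routine once one arranges for Blocker's off-simulation plays to avoid the subgraph that $\tau$ exploits.
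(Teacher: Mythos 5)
Your first direction (Angel wins $\Rightarrow$ Walker wins) is essentially the paper's argument: Walker simulates Angel, feeding Blocker's plays to an imagined Devil and copying Angel's responses. The timing offset you flag is real, but the paper glosses over it too, and it is not where the substance of the theorem lies.

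The substance is in the converse, and there your proposal has a genuine gap. You maintain a \emph{single} virtual Angel and declare that any Walker move that ``does not extend the climbing prefix'' is ignored (virtual state unchanged, Blocker answers with a filler move). But Walker is under no obligation to follow one prefix: he may play $x_0$, then a cover $a$ of $x_0$ (advancing your simulation), then a different cover $b$ of $x_0$, then a cover of $b$, and so on, completing a long prefix through $b$ that your simulation never defends against while your Blocker wastes turns on arbitrary fillers. The difficulty is not, as you suggest, keeping Blocker's off-simulation plays from interfering with $\tau$; it is that Walker effectively fields one Angel for \emph{each} prefix he has climbed, and Blocker has only one move per turn with which to fight all of them. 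The paper's resolution --- the idea your proposal is missing --- is to run a Devil simulation against every such Angel simultaneously and to observe that two Angels which have so far followed the same path call for the same Devil burns, so the burns required by all simulations together number one per Walker move, exactly Blocker's budget; when two prefixes coalesce at a common head, Blocker adopts one of the histories and continues only that simulation, all of whose required burns are already on the board. Without this sharing/coalescing argument (or a substitute), the converse does not go through. Your K\"onig's-lemma extraction of a uniform bound $K$ is fine and consistent with the paper's framework, but it does not repair the simulation.
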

\begin{proof}
Walker can copy a winning Angel strategy.  Both begin at $x_0$ and thereafter
remain at corresponding vertices.  Walker can treat a move by Blocker as a move
by Devil in the Angel-Devil game, using Angel's response as a Walker move in
the prefix game.  This keeps Walker at the same vertex as Angel, so the process
continues.

To prove the converse, we obtain a winning strategy for Blocker from a winning
strategy for Devil.  Imagine Walker as starting with an infinite stack of
Angels at $x_0$.  Walker will maintain having an infinite stack of Angels at
each element of each prefix that has been climbed.  When Walker extends a
prefix, he splits the stack at the previous head into two infinite stacks and
sends one to the new head.  Blocker examines the history of where the Angels in
the new stack have been and responds as Devil would if a lone Angel had
followed that path.  Since the stack moved along that path, all vertices Devil
needed to defend against that Angel have been burned, and hence Blocker has
played them all.  Hence Blocker always has all the moves needed to block all
prefixes started by Walker.

However, Walker may play a move that extends more than one prefix.  The
coalescing stacks have different histories.  Different moves may be needed to
maintain blocking those different Angels, but Blocker can play only one of them.
It suffices for Blocker to pick any one of the Angel histories that reach the
new head and copy the Devil's move to block that Angel, absorbing the coalescing
stacks into that one stack.  The reason is that all those Angels are now in the
same position and move together.  All moves needed to block any one of them via
the Devil strategy are in place, so Devil/Blocker can continue blocking the
Angel with the chosen history wherever it goes.  The moves that have been
played to block Angels on the other paths reaching this position are bonuses
for Blocker and can be ignored.

This strategy may direct Blocker to play a move that has already been played
by Blocker or Walker; in either case, Blocker can play arbitrarily.  When
Blocker is directed to play a position $x$ already played by Walker, the moves
are already in place to block all prefixes using $x$, so Blocker has no need
for $x$.

Since Devil has a winning strategy and Blocker can employ it against Angels
sitting at all heads of prefixes, simultaneously, Walker cannot play
arbitrarily long prefixes.
\end{proof}

It is difficult to devise winning strategies in Angel-Devil games.  To benefit
from the few explicit strategies that are known, we seek ways to transfer these
strategies between games.  We define a type of map from one rooted digraph to
another that facilitates such a transfer.
\begin{definition}
Let $G$ and $H$ be digraphs with roots $g_0$ and $h_0$.  A \emph{robust map}
from $G$ to $H$ is a map $\phi\st V(G) \to V(H)$ with $\phi(g_0) = h_0$ such
that whenever there is an edge from $\phi(v)$ to $w$ in $H$, there is also some
vertex $z$ in $G$ such that $\phi(z)=w$ and $vz\in E(G)$.
\end{definition}

Informally, a map is robust if, whenever the image $\phi(P)$ in $H$ of a path
$P$ in $G$ can be extended, $P$ can also be extended to $P'$ in $G$ so that
$\phi(P')$ is the extended path in $H$.

\begin{theorem}\label{thm:mapwin}
Let $G$ and $H$ be two rooted digraphs, and let $\phi\st G \to H$ be a robust
map from $G$ to $H$.  If Angel wins the Angel-Devil game in $H$, then Angel
also wins in $G$.
\end{theorem}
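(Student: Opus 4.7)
The plan is to convert a winning Angel strategy $\sigma$ on $H$ into one on $G$ by having Angel in $G$ shadow a simulated play in $H$ via $\phi$. Throughout the game I will maintain two invariants: (a) whenever the real Angel sits at $v \in V(G)$, the shadow Angel sits at $\phi(v) \in V(H)$; and (b) every vertex burned in $G$ has its $\phi$-image burned in $H$. Both hold at the outset, since the real Angel starts at $g_0$, the shadow Angel starts at $h_0 = \phi(g_0)$, and no burns have occurred.

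On each of Angel's turns in $G$, I consult the shadow game: $\sigma$ directs the shadow Angel to move from $\phi(v)$ to some unburned out-neighbor $w$ of $\phi(v)$ in $H$; by robustness of $\phi$ there exists $z \in V(G)$ with $vz \in E(G)$ and $\phi(z) = w$, and Angel in $G$ moves to such a $z$. This moves the shadow Angel to $w = \phi(z)$, preserving (a). On each of Devil's turns, when Devil in $G$ burns some vertex $u$, I instruct the shadow Devil to burn $\phi(u)$ in $H$; if $\phi(u)$ is already burned in $H$ the shadow Devil simply skips its turn, which only helps $\sigma$. This preserves (b), because each newly burned vertex in $G$ either has its image freshly burned in $H$ or already had it burned.

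The main step to verify, and the only real subtlety, is that Angel's chosen move to $z$ is actually legal, i.e.\ that $z$ is unburned in $G$. Here invariant (b) does the work by contrapositive: if $z$ were burned in $G$, then $\phi(z) = w$ would be burned in $H$, contradicting the fact that $\sigma$ selected $w$ from among the unburned out-neighbors of $\phi(v)$. Hence Angel in $G$ always has a legal move, and since $\sigma$ lets the shadow Angel survive forever, the shadowed strategy lets Angel in $G$ survive forever as well, so Angel wins the Angel-Devil game on $G$.
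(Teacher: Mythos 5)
Your proposal is correct and follows essentially the same simulation argument as the paper: maintain a shadow game in $H$, translate each Devil burn in $G$ to a burn of its $\phi$-image in $H$, and use robustness plus the burned-image invariant to pull the $H$-Angel's move back to a legal move in $G$. No gaps; the paper's proof is the same argument stated slightly less formally.
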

\begin{proof}
Given a winning strategy for Angel in $H$, we define a winning strategy in $G$.
We play an imaginary game in $H$ to track and simulate the actual game in $G$.
The $G$-Angel will maintain a position in $G$ that maps under $\phi$ to the
current position of the $H$-Angel in $H$.  This holds initially, since they
both start at the root.

At some time later, let $v$ be the location of the $G$-Angel, so the imagined
$H$-Angel is at $\phi(v)$.  The $G$-Devil moves by burning some vertex $y$ in
$G$.  The imagined $H$-Devil burns the corresponding vertex $\phi(y)$.
The imagined $H$-Angel has a winning response $w$ for this move.

Since $w$ must be an out-neighbor of the current vertex $\phi(v)$ in $H$, the
robustness of $\phi$ guarantees a vertex $z$ in $G$ such that $vz\in E(G)$ and
$\phi(z)=w$.  The vertex $z$ cannot previously have been burned by the
$G$-Devil, because the imagined $H$-Devil would have immediately burned $w$ in
$H$ to copy that move.  Since $z$ is available and $vz\in E(G)$, the $G$-Angel
can move to $z$.  This preserves the property that the $H$-Angel is on the
image of the position of the $G$-Angel, and the game continues.  Since the
$H$-Angel can move forever, the $G$-Angel also can move forever.
\end{proof}

\section{Walker-Blocker on High-dimensional Wedges}\label{wedge}
In this section we prove that Walker wins the prefix game on wedge posets of
dimension at least 14.  The \emph{$d$-dimensional wedge}, written $\ved$, is
the cover digraph of the infinite wedge poset in $d$ dimensions.  The vertices
are nonnegative integer $d$-tuples, with $xy\in E(\ved)$ if $y$ is obtained
from $x$ by increasing one coordinate by $1$.  The root is $(0,\ldots,0)$.

We compare the wedge with the ``power-2'' Angel-Devil game on $\ints^2$.
An Angel of power $k$ can move to any unburned square that is at most $k$ units
away in each horizontal or vertical direction.  Thus in the digraph for the
power-1 game each vertex has outdegree 8, while in the power-2 game the
vertices have outdegree 24.  It is known that Devil wins the power-1
game (see Conway~\cite{Conway}), while Angel wins the power-2 game (proved
independently by Kloster~\cite{Kloster} and M\'ath\'e~\cite{Mathe}).

To prove our result, we give a robust map from $\vey{24}$ to the digraph for
the power-2 Angel on $\ints^2$.  Since Angel wins that game,
Theorem~\ref{thm:mapwin} implies that Angel wins on $\ved$ when $d\ge 24$ (a
refinement of the argument lowers the bound to $d\ge 14$).  By
Theorem~\ref{MB-AD_equiv}, Walker then wins the prefix game on $\ved$, hitting
all levels.

The construction of our robust map uses the following intuitive idea: if
Angel has $d$ different ``types of move'' in some digraph, and these moves
commute, then we can introduce a (highly redundant) coordinate system on the
graph by counting how many times Angel has made each type of move.  This
coordinate system induces a robust map from $\ved$ into the digraph.  The
following lemma formalizes the idea.

\begin{lemma}\label{lem:movetrick}
If $H$ is a rooted digraph with $V(H) \subset \ints^n$, and $M \subset \ints^n$
is a finite set such that $xy \in E(H)$ implies $y-x\in M$, then there is
a robust map from $\vey{\sizeof{M}}$ to $H$.
\end{lemma}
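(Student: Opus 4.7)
The plan is to define $\phi$ explicitly, using the coordinates in $\vey{\sizeof{M}}$ as counts of how many times each element of $M$ has been applied to $h_0$. Fix an enumeration $M=\{m_1,\dots,m_{\sizeof{M}}\}$, and for a vertex $v=(v_1,\dots,v_{\sizeof{M}})$ of $\vey{\sizeof{M}}$ set
\[
\phi(v) \;=\; h_0 + \sum_{j=1}^{\sizeof{M}} v_j\, m_j,
\]
where $h_0$ denotes the root of $H$. The root condition $\phi(0,\dots,0)=h_0$ is immediate.

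For robustness, suppose $\phi(v)w\in E(H)$. The hypothesis on $M$ forces $w-\phi(v)\in M$, so $w-\phi(v)=m_j$ for some index $j$. Let $e_j$ denote the $j$-th standard unit vector in $\ints^{\sizeof{M}}$ and set $z=v+e_j$. Then $z$ is a vertex of $\vey{\sizeof{M}}$, the edge $vz$ lies in $E(\vey{\sizeof{M}})$ because $z$ is obtained from $v$ by incrementing exactly one coordinate by one, and $\phi(z)=\phi(v)+m_j=w$, matching the definition of a robust map.

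The one mild technicality is that the expression $h_0+\sum_j v_j m_j$ lies in $\ints^n$ and need not always belong to $V(H)$. In the intended application $V(H)=\ints^n$ and this is automatic, so the natural map works verbatim. In the general case one may adjoin the missing points of $\{h_0+\sum_j a_j m_j:a_j\in\nats\}$ to $V(H)$ as isolated vertices: this leaves the Angel--Devil game on $H$ unchanged (since Angel starting at $h_0$ cannot reach an isolated vertex, and Devil gains nothing by burning one), and it ensures $\phi$ lands in the enlarged vertex set while leaving the robustness verification intact (vacuously so at vertices $v$ whose image is isolated). I expect this bookkeeping to be essentially the only subtle point; the substance of the argument is the one-line calculation above.
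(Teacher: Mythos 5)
Your proof is correct and essentially identical to the paper's: the same coordinate-counting map $\phi(v)=h_0+\sum_j v_j m_j$, with robustness verified by exactly the same one-line computation ($w-\phi(v)=m_j$, take $z=v+e_j$). The technicality you flag---that the image of $\phi$ need not lie in $V(H)$---is real and is silently passed over in the paper; your isolated-vertex fix handles it cleanly, and in the intended application $V(H)=\ints^n$, so it is moot there anyway.
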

\begin{proof}
Let $d = \sizeof{M}$, and let $\VEC m1d$ be the elements of $M$.
Define $\phi\st\ved\to V(H)$ by $\phi(\VEC x1d) = h_0 +\SE i1d x_im_i$,
where $h_0$ is the root of $H$.  Since $\phi(0,\ldots,0) = h_0$, the start
condition is satisfied.  Now consider $(\phi(v),w) \in E(H)$.  We must find
$z\in V(\ved)$ such that $\phi(z)=w$ and $vz\in E(G)$.  Since
$(\phi(x),v)\in E(H)$, the hypothesis guarantees existence of $m_i \in M$ such
that $\phi(x)+m_i = v$.  With $e_i$ denoting the unit vector with $1$ in
coordinate $i$, we have $\phi(x+e_i)=\phi(x)+m_i = v$, and
$(x,x+e_i)\in E(\ved)$.  Hence $\phi$ is robust.
\end{proof}

The underlying digraphs of the classical Angel-Devil game fit the hypothesis of
the lemma, yielding the following corollary:

\begin{corollary} \label{maincor}  
Angel wins the Angel-Devil game on $\vey{d}$ for $d\ge 14$ (and hence also
Walker wins the prefix game).
\end{corollary}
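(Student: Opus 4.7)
The plan is to assemble Lemma~\ref{lem:movetrick}, Theorem~\ref{thm:mapwin}, and Theorem~\ref{MB-AD_equiv}. The parenthetical statement about Walker follows from Theorem~\ref{MB-AD_equiv} once Angel's win on $\ved$ is established, so I focus on producing an Angel strategy.

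The immediate, weaker target is $d=24$. Let $H$ be the rooted digraph of the power-$2$ Angel-Devil game on $\ints^2$: vertex set $\ints^2$, rooted at the origin, with $xy\in E(H)$ exactly when $y-x$ lies in the set $M$ of $24$ nonzero vectors $(a,b)$ satisfying $\max(|a|,|b|)\le2$. This $H$ fits the hypothesis of Lemma~\ref{lem:movetrick} with $n=2$ and $\sizeof{M}=24$, which therefore produces a robust map $\vey{24}\to H$. Since Kloster~\cite{Kloster} and M\'ath\'e~\cite{Mathe} independently proved that Angel wins the power-$2$ game on $\ints^2$, Theorem~\ref{thm:mapwin} hands Angel a winning strategy on $\vey{24}$.

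To sharpen the bound from $24$ down to $14$, the plan is to replace $M$ by a set $M'\subset\ints^n$ of size at most $14$ (for some small $n$) whose induced ``Angel with move set $M'$'' game on $\ints^n$ is still an Angel-win. The same application of Lemma~\ref{lem:movetrick} and Theorem~\ref{thm:mapwin} then yields an Angel-win on $\vey{14}$, which spreads to all larger $d$ via the coordinate projection $(x_1,\ldots,x_d)\mapsto(x_1,\ldots,x_{14})$, which one checks to be a robust map $\vey{d}\to\vey{14}$. One obtains such an $M'$ either by pruning the Kloster or M\'ath\'e strategy down to the (at most) $14$ moves it actually invokes, or by adapting their strategy to a deliberately restricted move alphabet of size $14$.

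The only genuine obstacle is this refinement. The route through the unmodified $24$-move power-$2$ game is a mechanical composition of Lemma~\ref{lem:movetrick}, Theorem~\ref{thm:mapwin}, and Theorem~\ref{MB-AD_equiv}, but shrinking the move alphabet from $24$ to $14$ while preserving the Angel-win forces one to open up a published Angel strategy and control precisely which moves it invokes.
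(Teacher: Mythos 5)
Your route for $d\ge 24$ is exactly the paper's: take $M$ to be the $24$ nonzero vectors $(a,b)$ with $\max(|a|,|b|)\le 2$, apply Lemma~\ref{lem:movetrick} to get a robust map from $\vey{24}$ to the power-$2$ digraph on $\ints^2$, invoke Kloster and M\'ath\'e, and finish with Theorem~\ref{thm:mapwin} (and Theorem~\ref{MB-AD_equiv} for the parenthetical). That part is complete and correct.

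The gap is the descent from $24$ to $14$, which is precisely the step you flag as an ``obstacle'' and then do not carry out. The corollary as stated claims $d\ge 14$, so for $14\le d\le 23$ your argument proves nothing; a ``plan'' to prune a published strategy to at most $14$ move types is not a proof, and it is not automatic that either Kloster's or M\'ath\'e's strategy uses so few distinct displacement vectors. The paper closes this gap by citing a specific known result of W\"astlund~\cite{Wastlund}: the power-$2$ Angel can win using only moves that change the horizontal coordinate by at most $2$ and the vertical coordinate by at most $1$. That move set has exactly $5\cdot 3-1=14$ elements, so Lemma~\ref{lem:movetrick} applied to it yields a robust map from $\vey{14}$, and Theorem~\ref{thm:mapwin} gives the Angel win there; your observation that the coordinate projection $\vey{d}\to\vey{14}$ is robust (or a second application of Lemma~\ref{lem:movetrick}) then handles all $d\ge 14$. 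In short: the missing ingredient is not a new construction but the citation to W\"astlund's weaker-angel theorem, without which the stated bound of $14$ is unsupported.
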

\begin{proof}
Since Angel wins the power-2 Angel-Devil game~\cite{Kloster,Mathe}, in 
which Angel always has 24 possible moves expressed as coordinate vectors,
Lemma~\ref{lem:movetrick} and Theorem~\ref{thm:mapwin} together imply that
Angel wins in $\vey{24}$.  Furthermore, Angel can win that power-2 Angel-Devil
game using only moves changing the horizontal coordinate by at most 2 and
the vertical coordinate by at most 1 (proved by W\"astlund~\cite{Wastlund});
hence Angel wins in $\vey{14}$ (and thus in all higher-dimensional wedges).
\end{proof}

In Section~2, we showed that Breaker wins the ordered chain game on the wedge
$\vey{2}$.  The question remains: Who wins when $3\le d\le 13$?  We conjecture
the following.
\begin{conj} \label{bigconj}
For $d\ge 3$, Walker wins the prefix game on $\ved$.
\end{conj}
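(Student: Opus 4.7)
The plan is to attack Conjecture~\ref{bigconj} by focusing on the single case $d=3$. A winning Walker strategy on $\vey{3}$ immediately yields one on $\ved$ for every $d\ge 3$: the subposet of $\ved_k$ obtained by fixing the last $d-3$ coordinates at $0$ is isomorphic to $\vey{3}_k$, and if Walker commits to playing within it then Blocker's moves outside cannot obstruct him. Blocker still plays at most one move per turn, and a $\vey{3}$-strategy that beats a fully active Blocker certainly beats a Blocker who sometimes plays in an irrelevant region. Thus the conjecture reduces to proving that Walker wins the prefix game on $\vey{3}$.

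For $d=3$ I would attempt a direct Walker strategy using a potential function generalizing Theorem~\ref{mainthm}. Define the influence of $c$ on $a$ by
$f_a(c)=\binom{\lvert c-a\rvert_1}{c_1-a_1,\,c_2-a_2,\,c_3-a_3}\,3^{-\lvert c-a\rvert_1}$,
the probability that a uniform monotone random walk from $a$ visits $c$; let $\Phi_a=\sum_{s\in S(a)} f_a(s)$, where $S(a)$ is the set of Blocker-occupied elements at or above $a$. The averaging identity $\Phi_a=\tfrac13\sum_i \Phi_{a+e_i}$ guarantees Walker a successor of no greater potential; each Blocker move raises $\Phi$ by at most $\tfrac13$; the initial potential on $\vey{3}$ is $0$ (no forbidden boundary moves, unlike $\Bkpsq$); and each forced skip decreases $\Phi$ by at least $1$. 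A naive amortization then says Blocker can force skips on at most a $1/3$ fraction of levels, consistent with but not yet proving ``no skips at all.''

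Closing this $1/3$ gap is the main obstacle. The $\tfrac13$ bound on Blocker's per-move gain is tight only far from the head: immediately above the head, Blocker can pile up near-$\tfrac13$ gains in quick succession and force a skip before Walker's amortized budget catches up. To turn the potential argument into a proof, I would seek a secondary invariant---for instance, the number of unblocked level-$(\ell{+}1)$ successors of the current prefix, or a modified potential that penalizes concentration near the head---together with a Walker rule that deviates from pure potential-minimization when that invariant becomes dangerously small. If such a hybrid strategy can be shown never to drive $\Phi$ up to $1$, then no skip is ever forced. I expect the decisive step to be finding the right secondary invariant; as a fallback, one could instead try to shrink the bound $d\ge14$ in Corollary~\ref{maincor} via stronger Angel-Devil results, but the reduction above shows that $d=3$ is truly the heart of the matter.
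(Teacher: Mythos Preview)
The statement is a conjecture; the paper does not prove it and poses it as an open problem immediately after Corollary~\ref{maincor}. There is therefore no paper proof to compare against, and your proposal is likewise not a proof---you say so yourself. What you have is a sound reduction plus an honest diagnosis of where the natural attack stalls.

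The reduction to $d=3$ is correct: embedding $\vey{3}_k$ in $\ved_k$ by zeroing the last $d-3$ coordinates and having Walker ignore Blocker moves outside that copy works exactly as you describe, so a Walker win on $\vey{3}$ would settle the full conjecture. The multinomial potential is also the right generalization of Theorem~\ref{mainthm}: the averaging identity holds, Blocker's per-move gain is at most $1/3$, and each forced skip drops $\Phi$ by at least $1$. Two small corrections: the amortization actually gives $S\le W/3$ and hence $S\le L/4$, so the bound on the skipped fraction is $1/4$, not $1/3$; and the phrase ``tight only far from the head'' is backwards---the $1/3$ bound is attained precisely at the three immediate successors and decays with distance.

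The genuine gap is the one you already name: the potential argument bounds the skip rate strictly below $1$ but not at $0$, and you do not produce the secondary invariant you call for. Until such an invariant (or some other mechanism) is exhibited and shown to keep $\Phi$ from ever reaching $1$ at the head, the conjecture remains open---which is exactly the paper's position as well.
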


\section{Fractional Devils and Biased Games}\label{bias}
In this section we consider {\it $b$-biased games}, where Blocker makes $b$
moves in response to every move by Walker.  Similarly, Devil burns $b$
positions on each move in the $b$-biased Angel-Devil game.  To study
$b$-biased games, we introduce another variation.

For a positive real number $p$, the \emph{fractional $p$-Devil game} is
played like the Angel-Devil game, but now vertices are not simply burned or
unburned.  Instead, each vertex has \emph{damage} between $0$ and $1$,
initially $0$.  Angel may move to a vertex if its damage is less than $1$.
Devil, on his turn, increases the total damage by at most $p$ on a finite set
of vertices (the damage on any one vertex cannot decrease).

In the fractional $1$-Devil game, Devil may choose to burn one vertex per turn
as in the standard Angel-Devil game, but he may also choose to burn several
vertices partially.  Thus the fractional Devil is at least as strong as the
standard Devil.  We do not know whether there are any digraphs on which the
fractional $1$-Devil wins but the standard Devil loses.

To apply the fractional model, we also generalize the concept of robustness:

\begin{definition}
Let $G$ and $H$ be digraphs with roots $g_0$ and $h_0$.  For $k\in\NN$, a
\emph{$k$-robust map} from $G$ to $H$ is a map $\phi\st V(G) \to V(H)$ with
$\phi(g_0) = h_0$ such that when there is an edge from $\phi(v)$ to $w$ in $H$,
there are at least $k$ vertices $z$ in $G$ with $\phi(z)=w$ and $vz\in E(G)$.
\end{definition}

We can now generalize Theorem~\ref{thm:mapwin}:

\begin{theorem}\label{kmap}
Let $G$ and $H$ be two rooted digraphs, let $p$ be a positive real number, and
let $\phi$ be a $k$-robust map from $G$ to $H$.  If Angel wins the fractional
$p$-Devil game in $H$, then Angel wins the fractional $pk$-Devil game in $G$.
\end{theorem}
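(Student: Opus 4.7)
The plan is to generalize the proof of Theorem~\ref{thm:mapwin}: the $G$-Angel will mirror the winning Angel of a simulated fractional $p$-Devil game on $H$, while translating the real $G$-Devil's moves into simulated $H$-Devil moves. I will maintain the invariant that whenever the $G$-Angel stands at $v$, the simulated $H$-Angel stands at $\phi(v)$; this holds initially because $\phi(g_0) = h_0$.

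The new ingredient, compared with Theorem~\ref{thm:mapwin}, is the translation of fractional damage. I will declare the simulated damage on each $w \in V(H)$ to be
\[
d_H(w) \;:=\; \min\!\Bigl(1,\; \tfrac{1}{k}\!\sum_{z \in \phi^{-1}(w)} d_G(z)\Bigr),
\]
where $d_G$ is the current damage in $G$. When the $G$-Devil raises $d_G(z)$ by $\Delta_z$ with $\sum_z \Delta_z \leq pk$, the increase in $d_H(w)$ is at most $\tfrac1k\sum_{z\in\phi^{-1}(w)}\Delta_z$, so the total $H$-damage added per round is at most $\tfrac1k \sum_z \Delta_z \leq \tfrac{pk}{k} = p$. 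Hence the simulated $H$-Devil is legal.

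For the lift, suppose the winning $H$-Angel strategy moves from $\phi(v)$ to an out-neighbor $w$; legality in $H$ forces $d_H(w) < 1$, so $\sum_{z \in \phi^{-1}(w)} d_G(z) < k$. By $k$-robustness, $v$ has at least $k$ distinct out-neighbors $z_1,\ldots,z_k$ in $G$ with $\phi(z_i) = w$. These lie in $\phi^{-1}(w)$, and since each $d_G(z_i) \leq 1$, not all can have $d_G(z_i) = 1$ (else their contribution alone would already total $k$). The $G$-Angel plays any $z_i$ with $d_G(z_i) < 1$, which is a legal move preserving the invariant. As the $H$-Angel never becomes stuck, neither does the $G$-Angel.

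The delicate point, where I expect the argument to require the most care, is the cap at $1$ in the definition of $d_H$: without it, a vertex $w$ with many mildly damaged preimages could be pushed above damage $1$ and would force the simulated $H$-Devil to exceed its budget in some round; with the cap, the simulated state stays inside $[0,1]$, the per-round budget remains bounded by $p$, and the only inequality actually needed for the lift, namely $d_H(w) < 1 \Rightarrow \sum_{z\in\phi^{-1}(w)} d_G(z) < k$, is preserved exactly in the regime where it is applied.
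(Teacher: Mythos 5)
Your proof is correct and follows essentially the same route as the paper's: simulate the $H$-game by scaling the $G$-Devil's damage by $1/k$ and pushing it forward along $\phi$, then use $k$-robustness plus the bound $\sum_{z\in\phi^{-1}(w)} d_G(z)<k$ to find an undamaged lift of the $H$-Angel's move. Your explicit cap at $1$ is a small tidiness improvement over the paper's presentation but not a different argument.
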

\begin{proof}
We modify the proof of Theorem~\ref{thm:mapwin} in a straightforward way.
When the $G$-Angel is at $v$, and the $G$-devil adds damage $(\VEC x1r)$ to
vertices $(\VEC v1r)$ (totalling at most $pk$), the simulated $H$-devil adds
damage $x_i/k$  to vertex $\phi(v_i)$, for $1\le i\le r$; this is a legal move.
Let moving from $\phi(v)$ to $w$ be the response in the winning strategy for
the imaginary $H$-angel; this requires that damage less than $1$ has been done
to $w$.  Since damage has been done via $\phi$, we conclude that the total
damage by the $G$-Devil on preimages of $w$ has been less than $k$.
Since $\phi$ is $k$-robust, there are at least $k$ such preimages that are
out-neighbors of $v$ in $G$, and hence one of them is available as a move
for Angel in $G$.
\end{proof}

Kloster's proof~\cite{Kloster} that Angel wins the power-2 Angel-Devil game can
be adapted to show that Angel also wins that game against the fractional
$1$-Devil.  Hence Angel wins the fractional $1$-Devil game in $\vey{24}$, by
Lemma~\ref{lem:movetrick} and Theorem~\ref{kmap}.  We transform this winning
strategy into a winning strategy against the (biased) fractional $b$-Devil in
$\vey{24b}$ by constructing highly robust maps and applying Theorem~\ref{kmap}
again.
\begin{lemma}\label{lem:mult}
For $d,k\in\NN$, there is a $k$-robust map from $\vey{kd}$ to $\vey{d}$.
\end{lemma}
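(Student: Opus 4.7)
The plan is to produce an explicit $k$-robust map $\phi:\vey{kd}\to\vey{d}$ obtained by partitioning the $kd$ coordinates of $\vey{kd}$ into $d$ consecutive blocks of size $k$ and summing within each block. Concretely, I would index the coordinates of $\vey{kd}$ by pairs $(j,i)$ with $1\le j\le d$ and $1\le i\le k$, and define $\phi(x)=y$ where $y_j=\sum_{i=1}^{k}x_{j,i}$ for $1\le j\le d$. This evidently maps $V(\vey{kd})$ into $V(\vey{d})$ (coordinates remain nonnegative integers) and sends the root $(0,\ldots,0)$ to the root.

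Next I would verify $k$-robustness. Suppose $(\phi(v),w)\in E(\vey{d})$. By definition of $\vey{d}$, the vector $w$ is obtained from $\phi(v)$ by incrementing exactly one coordinate, say coordinate $j^*$, by $1$. For each $i\in\{1,\ldots,k\}$, let $z^{(i)}$ be the vertex of $\vey{kd}$ obtained from $v$ by increasing coordinate $(j^*,i)$ by $1$. Then $vz^{(i)}\in E(\vey{kd})$, and applying $\phi$ increases only the $j^*$-th block-sum by $1$, so $\phi(z^{(i)})=w$. The $k$ vectors $z^{(1)},\ldots,z^{(k)}$ are pairwise distinct since they differ from $v$ in different coordinates, so we have exhibited $k$ distinct preimages of $w$ that are out-neighbors of $v$ in $\vey{kd}$. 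Hence $\phi$ is $k$-robust.

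There is essentially no obstacle here: the construction is dictated by the goal of replicating each ``direction'' of $\vey{d}$ as $k$ parallel directions in $\vey{kd}$, and the verification is mechanical once the partition is fixed. The only care needed is to check that the $k$ candidate preimages are genuinely distinct edges of $\vey{kd}$, which is immediate because they modify $v$ in different coordinate positions within block $j^*$. This lemma then feeds into Theorem~\ref{kmap} to promote a winning strategy against the fractional $1$-Devil in $\vey{24}$ to one against the fractional $b$-Devil in $\vey{24b}$, as indicated in the paragraph preceding the lemma.
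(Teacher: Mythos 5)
Your proof is correct and is essentially the paper's argument: both construct $\phi$ by partitioning the $kd$ coordinates into $d$ classes of size $k$ and summing within each class, then exhibit the $k$ out-neighbors obtained by incrementing each coordinate of the relevant class. The only cosmetic difference is your choice of consecutive blocks versus the paper's residue classes modulo $d$, and the paper itself notes that any such partition works.
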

\begin{proof}
Define $\phi$ by mapping $(\VEC x1{kd})$ to the $d$-tuple whose $i$th
entry is $\SE j0{k-1} x_{jd+i}$.  To see that $\phi$ is $k$-robust, consider
$(\phi(v),w)\in E(\vey{d})$.  Note that $w=\phi(v)+e_h$ for some $h$, where
$e_h$ is $1$ at index $h$ and $0$ everywhere else.  The $k$ vertices of the
form $v+e_{h+jd}$, where the subscript is taken modulo $kd$, are all
outneighbors of $v$ in $\vey{kd}$ that map to $w$. 
\end{proof}

Note that every partition of the $kd$ coordinates into $d$ classes of size $k$
induces a $k$-robust map; let the $i$th coordinate of the image be the sum of
the $k$ coordinates in the $i$th class of the partition.

\begin{theorem}
Angel wins the fractional $b$-Devil game in $\vey{24b}$ and hence also
wins the $b$-biased Angel-Devil game in $\vey{24b}$.
\end{theorem}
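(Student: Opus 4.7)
The plan is to chain together the machinery developed earlier in this section. First, I would invoke the adapted Kloster result stated just before Lemma~\ref{lem:mult}: Angel wins the fractional $1$-Devil variant of the power-$2$ Angel-Devil game on $\ints^2$. Call the underlying rooted digraph $H$. Every edge of $H$ corresponds to adding one of $24$ fixed coordinate-difference vectors (the power-$2$ move vectors) to the current position, so the hypothesis of Lemma~\ref{lem:movetrick} is satisfied with $n=2$ and $|M|=24$. Thus that lemma produces a robust (i.e.\ $1$-robust) map $\psi\colon\vey{24}\to H$. Applying Theorem~\ref{kmap} with $p=1$ and $k=1$ transfers the winning Angel strategy from the fractional $1$-Devil game on $H$ to the fractional $1$-Devil game on $\vey{24}$.

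Next, I would apply Lemma~\ref{lem:mult} with $d=24$ and $k=b$ to obtain a $b$-robust map $\rho\colon\vey{24b}\to\vey{24}$. A second application of Theorem~\ref{kmap}, now with $p=1$ and $k=b$, lifts the winning strategy for Angel against the fractional $1$-Devil on $\vey{24}$ to a winning strategy against the fractional $b$-Devil on $\vey{24b}$. This proves the first half of the statement.

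For the second half, I would observe that the $b$-biased Angel-Devil game is a special case of the fractional $b$-Devil game: burning $b$ vertices outright in the biased game is exactly the fractional move that places damage $1$ on each of those $b$ vertices, contributing total damage $b$. Hence any Angel strategy that wins against the fractional $b$-Devil automatically wins against the biased $b$-Devil, completing the proof. The only substantive ingredient is the promised extension of Kloster's argument to the fractional $1$-Devil; once that is in hand, the rest of the argument is merely a matter of composing the two robust maps $\rho$ and $\psi$ and tracking the multiplicative behavior of the Devil's budget under Theorem~\ref{kmap}.
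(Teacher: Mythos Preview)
Your proposal is correct and matches the paper's argument essentially step for step: the paper likewise cites the adapted Kloster result to get Angel winning the fractional $1$-Devil game on $\vey{24}$ (via Lemma~\ref{lem:movetrick} and Theorem~\ref{kmap}, as spelled out just before Lemma~\ref{lem:mult}), then applies Lemma~\ref{lem:mult} and Theorem~\ref{kmap} to pass to $\vey{24b}$, and finally notes that the fractional $b$-Devil dominates the $b$-biased Devil. The only difference is that you re-derive the $\vey{24}$ step explicitly inside the proof rather than citing the preceding paragraph.
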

\begin{proof}
Lemma~\ref{lem:mult} provides a $b$-robust map from $\vey{24b}$ to $\vey{24}$.
Since Kloster's proof~\cite{Kloster} implies that Angel wins the fractional
$1$-Devil game in $\vey{24}$, Theorem~\ref{kmap} implies that Angel wins
against the fractional $b$-Devil in $\vey{24b}$.  Since the fractional
$b$-Devil is at least as strong as the Devil of bias $b$, Angel wins the
$b$-biased game in $\vey{24b}$.
\end{proof}

There are easy relationships between some biased games on wedges.
\begin{prop}\label{gap}
If Blocker wins the $b$-biased prefix game on $\vey{d}$, then 
Blocker wins the $(b+1)$-biased prefix game on $\vey{d+1}$.
\end{prop}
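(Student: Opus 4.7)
The plan is to construct Blocker's strategy in the $(b+1)$-biased game on $\vey{d+1}$ directly from a given winning $b$-biased Blocker strategy $\sigma$ for the $k$-prefix game on $\vey{d}$. The key idea is to use one of Blocker's $b+1$ moves each turn to prevent Walker from ever stepping in the $(d+1)$st coordinate direction, while the remaining $b$ moves simulate $\sigma$. Under the identification of $\vey{d}$ with the subposet $\{x \in \vey{d+1} : x_{d+1} = 0\}$, this traps Walker inside a copy of $\vey{d}$, where $\sigma$ defeats him.

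Let $e_{d+1}$ denote the unit vector in the last coordinate. Blocker's strategy will maintain the invariant that after each of Blocker's turns, if Walker's current head is $x$, then $x + e_{d+1}$ has already been played by Blocker. Walker first plays the root $(0,\ldots,0)$; Blocker responds by playing $e_{d+1}$ as one of his $b+1$ moves (establishing the invariant) and using the remaining $b$ moves to play $\sigma$'s response to Walker's root move in the simulated $\vey{d}$ game. On each subsequent turn the invariant forces Walker's move from $x$ to be some $x + e_i$ with $i \le d$. Blocker then plays $(x + e_i) + e_{d+1}$ to preserve the invariant, and uses his remaining $b$ moves to play $\sigma$'s response to Walker's step from $x$ to $x + e_i$ in the simulated $\vey{d}$ game. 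Whenever $\sigma$ calls for a vertex already played, Blocker plays arbitrarily.

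Because Walker can never step in the $e_{d+1}$ direction, his prefix stays entirely in $\{x_{d+1} = 0\}$ and corresponds precisely to a legal Walker prefix in the simulated $b$-biased game on $\vey{d}$. Since $\sigma$ prevents Walker from building a $k$-prefix there, Walker cannot build a $k$-prefix in $\vey{d+1}$ either, so Blocker wins the $(b+1)$-biased prefix game on $\vey{d+1}$.

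The only nontrivial verification is that the invariant-maintenance burns and the $\sigma$-directed burns do not clash in a way that derails the simulation. The invariant burns all lie in $\{x_{d+1} = 1\}$ and are indexed injectively by the distinct positions on Walker's prefix, so they are pairwise distinct; the $\sigma$-directed burns, under the identification, all lie in $\{x_{d+1} = 0\}$; the two streams are therefore disjoint, and any conflicts internal to the $\sigma$-stream are already handled by $\sigma$'s own bookkeeping. The main obstacle is purely conceptual rather than technical: recognizing that one extra unit of Blocker's bias is exactly what is needed to neutralize the one extra coordinate of freedom Walker gains from the added dimension.
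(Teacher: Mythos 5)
Your proof is correct and is essentially the paper's own argument: Blocker spends one of his $b+1$ moves each turn burning the $e_{d+1}$-successor of Walker's latest move, confining any in-order prefix to the subwedge $\{x_{d+1}=0\}\cong\vey{d}$, where the remaining $b$ moves execute the given winning $b$-biased strategy. The paper states this in two sentences; your version just spells out the invariant and the disjointness of the two streams of Blocker moves.
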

\begin{proof}
On each turn Blocker plays the outneighbor of Walker's previous move with
coordinate $d+1$ increased and uses the remaining $b$ moves to play the
strategy for $d$ dimensions.  Any move by Walker that increases the last
coordinate immediately loses a level, so Walker does best by playing the game
in $d$ dimensions.
\end{proof}

Define the {\it gap} of a biased Walker-Blocker game to be $d-b$, where $d$ is
the dimension and $b$ is the bias.
This suggests a natural question: Given a gap $g$, what is the smallest
dimension $d$ such that gap $g$ suffices for Blocker to hold Walker to a
fraction of the levels strictly less than 1?  Let $d(g)$ denote this minimum
dimension.  We know that $d(0)=1$, $d(1)=2$, and $3\le d(2)\le 5$.  The fact
that $d(1)=2$ follows from Theorem~\ref{wedge2}, since gap 1 in two dimensions
is an unbiased game.  The other results are trivial, except for the fact that
$d(2)\le 5$, which we prove below.

\begin{prop}
With bias 3, Blocker holds Walker to at most $4/5$ of the levels in the
ordered chain game on $\vey{5}$, and hence $d(2)\le5$.
\end{prop}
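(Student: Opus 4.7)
My plan is to exhibit an explicit Blocker strategy for the $3$-biased ordered chain game on $\vey{5}$ and verify via an amortized argument that Walker hits at most $4/5$ of the levels. The conclusion $d(2)\le 5$ then follows immediately from the definition, since $4/5 < 1$.

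My proposed strategy partitions the five coordinates into two pairs $\{1,2\}$, $\{3,4\}$ and a singleton $\{5\}$. At each turn, after Walker moves to $y_t$, Blocker allocates his three moves as follows: one move blocks the coordinate-$5$ successor $y_t+e_5$ (preventing Walker from ever advancing in coordinate $5$), and each of the remaining two moves executes the $2$-dimensional Blocker strategy of Theorem~\ref{wedge2} on one of the pairs, played relative to Walker's current $5$-dimensional position. Because Walker can never use coordinate $5$, her moves distribute between the two pairs. I would argue that after two Walker moves in a given pair, the $2$-dimensional strategy in that pair has blocked both within-pair successors of Walker's current position; once both pairs reach this ``prepared'' state simultaneously, all five of Walker's $5$-dimensional successors (including the coordinate-$5$ one) are blocked, forcing a skip. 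A counting argument would then show that Walker must reach this double-prepared state after at most four of her moves, so Blocker forces at least one skip per four Walker moves, which is equivalent to Walker hitting at most $4/5$ of the levels.

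The main obstacles I anticipate are twofold. First, because Blocker's two $2$-dimensional plays are made relative to Walker's current $5$-dimensional position (which changes every turn), the $2$-dimensional strategy can direct Blocker to replay a position already used on a previous turn; these conflicts must be handled gracefully (for instance by substituting a fresh grandsuccessor of $y_t$) without degrading the amortized skip rate. Second, when a skip is forced, Walker's position advances by two coordinates at once, which may reset one or both $2$-dimensional games; the delicate part of the proof is maintaining an invariant showing that the prepared states recur quickly enough to produce the next skip on schedule. If the naive pair-partition strategy falls just short of $4/5$, I would augment it by occasionally dedicating one of Blocker's moves to pre-blocking a grandsuccessor $y_t+e_i+e_j$, accumulating the extra blocks needed to force skips at the required rate.
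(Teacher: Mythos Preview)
Your plan has a genuine gap: the pair-partition strategy does not merely fall ``just short of $4/5$''---against it, Walker can hit \emph{every} level.  The difficulty you flag as a technicality is in fact fatal.  Because Blocker's plays in pair $\{1,2\}$ are made at Walker's current $5$-dimensional height, every time Walker steps in pair $\{3,4\}$ all previous $\{1,2\}$-blocks become stale (they lie strictly below Walker's new position in coordinates $3,4$).  Concretely, after Walker's move in pair $A$, your strategy uses the diagonal block from the previous round to finish preparing pair $A$, while in pair $B$ it can only lay a fresh diagonal block.  Walker then steps in pair $B$; now pair $B$ becomes prepared while pair $A$'s preparation is wiped out and only a new diagonal remains.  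Alternating pairs in this way, Walker is never simultaneously blocked in both pairs, so she never skips a level.  (A short trace starting from $00000$, with Walker playing $10000,10100,11100,11110,21110,\ldots$, confirms this.)  Your suggested patches---substituting grandsuccessors, occasionally pre-blocking $y_t+e_i+e_j$---do not address this alternation, since with only two non-$e_5$ moves per round Blocker cannot both maintain one pair and make progress in the other.

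The paper's argument is entirely different and avoids any coordinate partition.  Blocker spends his first five moves on the five permutations of $11110$, i.e.\ all level-$4$ points with every coordinate at most $1$; the remaining moves are used each round to block every immediate successor of Walker that would raise some coordinate to $2$.  This forces Walker to stay inside $\{0,1\}^5$, where after four steps she necessarily sits at a permutation of $11110$---but all of those are already occupied, so she must skip level $4$.  The pattern then repeats.  The essential idea you are missing is this global ``confine Walker to the unit cube and pre-occupy its top level'' trick, which is what makes the $4/5$ bound go through with exactly three Blocker moves per turn.
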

\begin{proof}
We may assume that Walker first takes 00000.  Blocker forces Walker to 
increase a coordinate that is 0 on each subsequent move to avoid skipping
levels, eventually forcing Walker to skip a level.  The table below gives
the moves by Walker and Blocker, up to symmetry.   With the three moves of his
first turn and two moves of his second, Blocker occupies all points at the
fifth level with largest coordinate $1$.  Blocker's other moves prevent
Walker from moving up one level to reach a point having a coordinate larger
than $1$.  After at most four moves,  Walker must skip a level, and the pattern
repeats.  Thus Blocker can hold Walker to $4/5$ of the levels.
\end{proof}

\begin{tabular}{l|lll}
Walker & \multicolumn{3}{c}{Blocker} \\
\hline
00000 &   01111 & 10111 & 11011 \\
10000 &   20000 & 11101 & 11110 \\
11000 &   21000 & 12000 & -- \\
11100 &   21100 & 12100 & 11200
\end{tabular}
\bigskip

We conclude with Conjecture~\ref{biggerconj}, which strengthens Conjecture~\ref{bigconj}.

\begin{conj}
\label{biggerconj}
There exists a constant $k$ such that for every dimension $d$, 
Maker wins the $(d-k)$-biased ordered chain game on $\ved$. 
\end{conj}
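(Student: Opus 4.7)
The plan is to extend the fractional Devil and robust map machinery from Section~\ref{bias}, aiming for a translation from Angel-winning to Walker-winning that costs only $O(1)$ additional dimensions per unit of Blocker bias. Concretely, for each bias $b$, I would try to produce a rooted digraph $H_b$ (living in some lattice $\ints^{n_b}$) with the following properties: the move set $M_b=\{y-x\st xy\in E(H_b)\}$ is finite with $|M_b|\le b+k$ for a constant $k$ independent of $b$, and Angel wins the fractional $b$-Devil game on $H_b$.  Given such a family, Lemma~\ref{lem:movetrick} produces a robust map $\vey{|M_b|}\to H_b$, and Theorem~\ref{kmap} then promotes a winning fractional Angel to a winning Walker in the $b$-biased game on $\vey{b+k}$, which is exactly Conjecture~\ref{biggerconj}.

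To construct $H_b$, I would try to scale up Kloster's~\cite{Kloster} potential-function proof for the power-$2$ Angel.  Kloster certifies the Angel's survival by maintaining an invariant about reachable safe regions, which the Devil can degrade by at most a fixed amount per turn.  Against a fractional $b$-Devil, degradation is $b$ times faster, so the safe scaffold must be roughly $b$ times richer.  The hope is that by enlarging the Angel's move set by only $b+O(1)$ additional, carefully chosen, directions---rather than multiplying by $b$ as in Lemma~\ref{lem:mult}---the enriched scaffold still supports Kloster's inductive argument.  A concrete first step is to verify that Kloster's proof generalizes to the fractional $b$-Devil on $\vey{24b}$, and then to attempt to compress the graph while preserving the invariant.

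The main obstacle is that the desired $H_b$ is extremely lean: with only $b+O(1)$ outneighbors per vertex, the Devil's $b$ fractional units of damage can nearly saturate any single future level-set of the Angel's descendants.  Any viable $H_b$ must therefore have very strong long-range connectivity, so that distinct Angel moves lead to essentially disjoint future subtrees that the Devil cannot cover simultaneously.  Building such a structure, and certifying that the Angel's potential argument still closes, appears to require genuinely new ideas; I suspect that resolving Conjecture~\ref{biggerconj} will depend on first settling the unbiased Conjecture~\ref{bigconj} for small~$d$, so as to reveal the combinatorial patterns underlying an efficient Walker strategy.
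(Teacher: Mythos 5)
The statement you are addressing is Conjecture~\ref{biggerconj}; the paper offers no proof of it, and your proposal is explicitly a research plan rather than a proof, so there is nothing to verify as correct. That said, the reduction scaffolding you describe is sound and is exactly the paper's own machinery: if for each bias $b$ you had a rooted digraph $H_b\subset\ints^{n_b}$ with difference set $M_b$ of size at most $b+k$ on which Angel beats the fractional $b$-Devil, then Lemma~\ref{lem:movetrick} gives a ($1$-robust) map $\vey{\sizeof{M_b}}\to H_b$, Theorem~\ref{kmap} with $k=1$ transfers the fractional $b$-Devil win to $\vey{b+k}$, and Theorem~\ref{MB-AD_equiv} converts this to a Walker win in the $b$-biased prefix game with constant gap $k$. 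You also correctly diagnose why the paper's own construction falls short of the conjecture: Lemma~\ref{lem:mult} is multiplicative, yielding dimension $24b$ and hence a gap of $23b$ that grows with $b$, and the step you label as a ``concrete first step'' (Angel beats the fractional $b$-Devil on $\vey{24b}$) is already proved in Section~\ref{bias}.

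The genuine gap is that the entire mathematical content of the conjecture is concentrated in the existence of the lean digraphs $H_b$, and your proposal supplies no construction and no candidate invariant. ``Compressing'' Kloster's argument is not a mechanical scaling: his proof is tied to the geometry of $\ints^2$ and a perimeter-type potential, and there is no indication that a move set of size $b+O(1)$ in any ambient lattice can support an analogous invariant when the Devil deals $b$ units of damage per turn --- indeed, as you note, the Devil can then place damage comparable to $1$ on all but $O(1)$ of the Angel's immediate options every turn, so any working $H_b$ would need long-range expansion properties that no known Angel-winning graph exhibits. Your closing suggestion that progress likely requires first resolving Conjecture~\ref{bigconj} for small $d$ is a reasonable assessment, but it confirms that what you have written is a statement of the difficulty, not a route around it.
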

\bibliographystyle{amsplain}

\begin{thebibliography}{99}
\bibitem{Beck1}
J. Beck, {\it Foundations of Positional Games}, Random Struct.~Alg.~{\bf 9} (1996), no. 1--2, pp. 15--47.
\bibitem{Beck2}
J. Beck, {\it Positional Games}, Combin.~Probab.~Comput.~{\bf 14} (2005), pp. 649--696.
\bibitem{Beck3}
J. Beck, Combinatorial Games: Tic-Tac-Toe Theory, Cambridge University Press, 2008.
\bibitem{BL}
M. Bednarska, T. \L uczak, {\it Biased positional games for which random strategies are nearly optimal}, Combinatorica {\bf 20} (2000), pp. 477--488.
\bibitem{Bowditch}
B. H. Bowditch, {\it The angel game in the plane}, Combin.~Probab.~Comput.~{\bf 16} (2007), no. 3, pp. 345--362.
\bibitem{Conway}
John H. Conway, {\it The angel problem}, in: Richard Nowakowski (editor) Games
of No Chance, volume 29 of MSRI Publications (1996), pp. 3--12.
\bibitem{HKSS}
D. Hefetz, M. Krivelevich, M. Stojakovi\'c, T. Szab\'o, {\it Fast winning strategies in Maker-Breaker games}, J. Combin. Theory B~{\bf 99} (2009), pp. 39--47.
\bibitem{Kloster}
O. Kloster, {\it A solution to the angel problem}, Theoret.~Comput.~Sci.~{\bf 389} (2007), no. 1--2, pp. 152--161.

\bibitem{Mathe}
A. M\'ath\'e, {\it The angel of power 2 wins}, Combin.~Probab.~Comput.~{\bf 16} (2007), no. 3, pp. 363--374.
\bibitem{Wastlund}
J. W\"astlund, {\it A weaker winning angel}, \url{http://www.math.chalmers.se/~wastlund/weakerAngel.pdf}
\end{thebibliography}

\end{document}